\documentclass[10pt]{amsart}

\usepackage{graphicx}
\usepackage{color}
\usepackage[inline]{enumitem}
\usepackage{hyperref}
\usepackage{stmaryrd} 
\usepackage{multicol}
\usepackage{tikz-cd}
\usepackage{stix}
\usepackage{dsfont}

\usepackage[a4paper,top=3cm,bottom=2cm,left=2.5cm,right=2.5cm,marginparwidth=1.75cm]{geometry}

\usepackage{ulem}
\usepackage{soul}

\newcommand{\Ad}{\mathrm{Ad}}
\newcommand{\KZ}{\mathrm{KZ}}
\newcommand{\De}{\mathrm{De}}
\newcommand{\Li}{\mathrm{Li}}
\newcommand{\reg}{\mathrm{reg}}
\newcommand{\rev}{\mathrm{rev}}
\newcommand{\wt}{\mathrm{wt}}
\newcommand{\dep}{\mathrm{dp}}

\newcommand{\newaction}{\raisebox{0.6ex}{\scalebox{0.4}{\ \   $\blacktriangleright$ \ }}}

\input cyracc.def
\DeclareFontFamily{U}{russian}{}
\DeclareFontShape{U}{russian}{m}{n}
        { <5><6> wncyr5
        <7><8><9> wncyr7
        <10><10.95><12><14.4><17.28><20.74><24.88> wncyr10 }{}
\DeclareSymbolFont{Russian}{U}{russian}{m}{n}
\DeclareSymbolFontAlphabet{\mathcyr}{Russian}
\makeatletter
\let\@math@cyr\mathcyr
\renewcommand{\mathcyr}[1]{\@math@cyr{\cyracc #1}}
\makeatother
\newcommand{\sh}{\mathcyr{sh}} 



\newtheorem{thm}{Theorem}[section]
\newtheorem{lem}[thm]{Lemma}

\newtheorem{prop}[thm]{Proposition}
\newtheorem{ex}[thm]{Example}

\theoremstyle{definition}
\newtheorem{definition}[thm]{Definition}

{\theoremstyle{definition} \newtheorem{rem}[thm]{Remark}}
{\theoremstyle{definition} \newtheorem{notation}[thm]{Notation}}
{\theoremstyle{definition} }

{\theoremstyle{remark} }

\numberwithin{equation}{section}


\title[$p$-adic MZVs and binomial multiple harmonic sums]{$p$-adic multiple zeta values and binomial multiple harmonic sums}

\author{Hidekazu Furusho}
\author{David Jarossay}

\address{Graduate School of Mathematics, Nagoya University, 
Furo-cho, Chikusa-ku, Nagoya, 464-8602, Japan}
\email{furusho@math.nagoya-u.ac.jp}

\address{De Vinci Higher Education, De Vinci Research Center, Paris, France}
\email{david.jarossay@devinci.fr}

\date{November 28, 2025}

\begin{document}

\begin{abstract}
We present a concise method for deriving an explicit formula for 
$p$-adic multiple zeta values. 
The formula features a variant of multiple harmonic sums, termed binomial multiple harmonic sums.

\end{abstract}

\maketitle
{\footnotesize \tableofcontents}

\setcounter{section}{-1}
\section{Introduction}

Multiple zeta values, which are prominent examples of periods in algebraic geometry, 
are the following real numbers : for $n_{1},\ldots,n_{d} \in \mathbb{N}_{\geq 1}$ with $n_{d}\geq 2$,

\begin{eqnarray} \zeta(n_{1},\ldots,n_{d}) &=& \sum_{0<m_{1}<\ldots<m_{d}} \frac{1}{m_{1}^{n_{1}} \cdots m_{d}^{n_{d}}}
\label{eq:series}
\\ &=& (-1)^r\int_{\Delta} \frac{dt_{1}}{t_{1} - \epsilon_{1}} \wedge \cdots \wedge \frac{dt_{n}}{t_{n} - \epsilon_{n}}  \label{eq:integrals}
\end{eqnarray}
where $n=n_{1} + \cdots + n_{d}$, $(\epsilon_{1},\ldots,\epsilon_{n})=(1,\underbrace{0,\ldots,0}_{n_{1}-1},\ldots,1,\underbrace{0,\ldots,0}_{n_{r}-1})$, and $\Delta = \{(t_{1},\ldots,t_{n}) \in \mathbb{R}^{n} \mid 0 <t_{1}< \cdots <t_{n} < 1\}$. We say that $n$ is the weight of $(n_{1},\ldots,n_{d})$ and that $d$ is the depth of $(n_{1},\ldots,n_{d})$. 
The $p$-adic multiple zeta values $\zeta_{p}^{\KZ}(n_{1},\ldots,n_{r}) \in \mathbb{Q}_{p}$, introduced by the first-named author \cite{F04}
are their $p$-adic analogues in the sense of Coleman integration via equation \eqref{eq:integrals}. 
In this paper we employ the numbers $\zeta_{p}^{\De}(n_{1},\ldots,n_{r}) \in \mathbb{Q}_{p}$, Deligne's variant of $p$-adic multiple zeta values
(consult \cite[\S 2.1]{F07} and \cite[\S 5.28]{DG}).

The definition of $p$-adic multiple zeta values does not come with an explicit formula. 
It was suggested in \cite{DG} to find a formula for $p$-adic multiple zeta values in terms of $p$-adic series, analogous to equation \eqref{eq:series}.

In one variable, it is known that $p$-adic zeta values coincide with special values of the $p$-adic Kubota-Leopoldt zeta function : for all $n\geq 1$, $\zeta_{p}^{\De}(n) = L_p(n, \varpi^{1-n})$. In higher depth, \"Unver \cite{U04, U15} and the second author \cite{J1,J2,J3} found formulas for $p$-adic multiple zeta values with different approaches and results. The second author's approach had several applications, including : proving a conjecture of Akagi, Hirose and Yasuda relating multiple harmonic sums to $p$-adic multiple zeta values ;  discovering and studying the notions of pro-unipotent harmonic actions, adjoint multiple zeta values and multiple harmonic values \cite{J4} ; and non-vanishing results \cite{J5}.

The goal of this paper is to provide what seems to us to be the shortest way to a formula for $p$-adic multiple zeta values in terms of $p$-adic series, in order to make $p$-adic multiple zeta values more accessible to non-expert readers. 

Our approach is a simpler variant of \"{U}nver's approach \cite{U04,U15}, where we have only one inductive equation to consider, and where we do not have to compute inductively overconvergent $p$-adic multiple polylogarithms. The simplification comes from the automorphism $z \mapsto \frac{z}{z-1}$ of $\mathbb{P}^{1} \setminus \{0,1,\infty\}$, which exchanges $1$ and $\infty$. Moreover, we express the inductive equation concisely by using certain combinatorial tools on non commutative formal power series.

Usually, the formulas for $p$-adic multiple zeta values involve multiple harmonic sums :

\begin{notation}
We call {\it multiple harmonic sums} the following rational numbers : for $d  \in \mathbb{N}_{\geq 1}$, $(n_{1},\ldots,n_{d}) \in \mathbb{N}^{d}_{\geq 1}$ and $m \in \mathbb{N}_{\geq 1}$,
\begin{equation*}\label{eq: MHS}
 h_{n_{1},\ldots,n_{d}}(m) := \sum_{0<m_{1}<\cdots < m_{d} < m} \frac{1}{m_{1}^{n_{1}} \cdots m_{d}^{n_{d}}}\in{\mathbb Q} 
 \end{equation*}

These numbers appear in the power series expansions of multiple polylogarithms, see equations \eqref{eq:polylog} and \eqref{eq: polylog MHS 2}.

In this paper, we will use instead the following variant :
\end{notation}

\begin{definition} \label{def bmhs} 
We call {\it binomial multiple harmonic sums} the following numbers : for $d  \in \mathbb{N}_{\geq 1}$, $(n_{1},\ldots,n_{d}) \in \mathbb{N}^{d}_{\geq 1}$ and $m \in \mathbb{N}_{\geq 1}$,
$$h^{B}_{n_{1},\ldots, n_{d}}(m) = (-1)^{m} \sum_{1 \leq k \leq m}\frac{h_{n_{1},\ldots,n_{d-1}}(k)}{k^{n_{d}}} {m-1 \choose m -k}\in{\mathbb Q},$$
An index $(n_{1},\ldots,n_{d})$ can also be interpreted as a word on the alphabet $\{e_{0},e_{1}\}$ by $(n_{1},\ldots,n_{d}) = e_{0}^{n_{d}-1}e_{1}\cdots e_{0}^{n_{1}-1}e_{1}$. Then, we extend the definition of $h^{B}_{w}$ to the case where the rightmost letter of $w$ is $e_{0}$, by : for any $n_{0} \in \mathbb{N}_{\geq 1}$, 
$$ h^{B}_{e_{0}^{n_{d}-1}e_{1}\cdots e_{0}^{n_{1}-1}e_{1}e_{0}^{n_{0}-1}} = \sum_{\substack{0\leq k_{1},\ldots,k_{d} \\ k_{1}+\cdots+k_{d} = n_{0}-1}} \prod_{i=1}^{d} {-n_{i} \choose k_{i}} h^{B}_{ e_0^{n_{d}+k_{d}-1} e_1 \cdots e_0^{n_{1}+k_{1} - 1}e_1 }\in \mathbb{Q}. $$
and to the case of depth 0 : for $(i,n) \in \mathbb{N}^{2}_{\geq 0}$, $h^{B}_{e_{0}^{n}}(i)= \left\{ \begin{array}{ll} 1 & \text{if } (i,n)=(0,0)
\\ 0 & \text{ otherwise } \end{array} \right. $.
\end{definition}

Binomial multiple harmonic sums appear in the power series expansion of multiple polylogarithms to which we have applied the automorphism $z \mapsto \frac{z}{z-1}$ of $X=\mathbb{P}^{1} \setminus \{0,1,\infty\}$, see the formulae \eqref{eq: relation between BMHS and MPL} and \eqref{eq: relation between pBMHS and MPL}.

The main result of this paper is the following formula for $p$-adic multiple zeta values, in terms of binomial multiple harmonic sums :

\begin{thm}\label{thm 0.2}
For all $d \in \mathbb{N}_{\geq 1}$ and $(n_{1},\ldots,n_{d}) \in \mathbb{N}_{\geq 1}^{d}$
,
we have

\begin{multline*} \label{eq:2.2}
\zeta_{p}^{\De}(n_{1},\ldots,n_{d}) 
=(-1)^{d}  \lim_{|m|_{p}\rightarrow 0} 
\sum_{d'=0}^{d}\sum_{\substack{a,b\geq 0 \\ a+b \leq n_{d'}-1}}
\sum_{\substack{0\leq i,j,l \\ i+l+pj = m}} \bigg( h^{B}_{e_0^{n_{d}-1}e_1 \dots e_0^{n_{d'+1}-1}e_1e_{0}^{a}}(i)  
 \cdot 
\sum_{\substack{i_1,\ldots,i_b \geq 0 \\ i_1+\cdots+i_b=l \\ (i_1,p)=\cdots=(i_b,p)=1}}\frac{1}{b!\cdot i_1\cdots i_b} \cdot
\\
\
\sum_{\substack{1\leq r \\ (w_{k})_{k=1}^{r}  \text{$e_1$-segments of }\\ (e_0^{n_{d'}-1-a-b} e_1 \dots e_0^{n_{1}- 1}e_1)^\rev}} 
\frac{(-1)^{\sum_{i=1}^{d'} n_{i} - a-b}}{p^{\sum_{i=1}^{d'}n_{i}-a-b-\sum_{k=1}^r(\wt(w_k)-1)}}
\bigg((-1)^{d'-r}\prod_{k=1}^{r}  (\zeta_{p}^{\De})^{\Ad}(w_{k}) \bigg)\cdot h^{B}_{(e_0^{n_{d'}-1-a-b} e_1 \dots e_0^{n_{1}- 1}e_1)^{\rev} / (w_{k})_k} (j) \bigg) .
\end{multline*}
\end{thm}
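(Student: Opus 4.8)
The plan is to start from the generating series
\[
L(z) = \sum_{w} \Li_{w}(z)\, w \in \mathbb{Q}_{p}\langle\langle e_{0},e_{1}\rangle\rangle
\]
of overconvergent $p$-adic multiple polylogarithms, viewed as the canonical horizontal section of the $p$-adic KZ connection $d - \left(e_{0}\frac{dz}{z} + e_{1}\frac{dz}{z-1}\right)$ on $X=\mathbb{P}^{1}\setminus\{0,1,\infty\}$, normalized at the tangential basepoint at $0$. By construction the Deligne associator, whose coefficients are the $\zeta_{p}^{\De}(n_{1},\ldots,n_{d})$, is recovered from the regularized behaviour of $L$ between the tangential basepoints at $0$ and $1$. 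The first step is to record, via \eqref{eq: relation between BMHS and MPL} and \eqref{eq: relation between pBMHS and MPL}, the power series expansion of the pullback $\tau^{*}L$ under $\tau:z\mapsto \frac{z}{z-1}$: since $\tau$ fixes $0$ and exchanges $1$ and $\infty$, the binomial coefficients produced by expanding $(z/(z-1))^{k}$ convert the ordinary harmonic sums $h$ into the binomial harmonic sums $h^{B}$ (this is exactly the $\binom{m-1}{m-k}$ appearing in Definition \ref{def bmhs}). This is what makes the approach short: we only ever work near $z=0$, where the series converge.

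Next I would set up the single Frobenius functional equation. The overconvergent Frobenius relates $L$ along $z$ and along $z^{p}$; transporting this identity through $\tau$, and using that $\tau$ interchanges the two relevant tangential basepoints, produces \emph{one} equation in which the associator appears conjugated, that is, through its adjoint action, at the basepoints that have been swapped. This is the origin of the factors $(\zeta_{p}^{\De})^{\Ad}(w_{k})$: the Frobenius acts on the de Rham path torsor by conjugation, so the comparison is governed by the adjoint of the associator rather than by left or right translation alone, and there is no need to compute the polylogarithms inductively. Extracting coefficients, together with the group-likeness of $L$ (equivalently the coproduct on words), yields the deconcatenation of $(n_{1},\ldots,n_{d})$ at the cut point $d'$, with the right-hand factor $e_{0}^{n_{d}-1}e_{1}\cdots e_{0}^{n_{d'+1}-1}e_{1}e_{0}^{a}$ contributing a binomial harmonic sum at $i$ and the reversed left-hand factor to be analysed further.

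The core step is to extract $\zeta_{p}^{\De}(n_{1},\ldots,n_{d})$ as the limit $\lim_{|m|_{p}\to 0}$ of truncated binomial harmonic sums. I would read off the single functional equation at the truncation parameter $m$ and solve it directly: the $p$-th power map forces the splitting $i+l+pj=m$ and the powers $p^{-(\sum_{i} n_{i}-a-b-\sum_{k}(\wt(w_{k})-1))}$, while the shuffle-regularized part attached to the letters $e_{1}$ at the cut produces the factor $\frac{1}{b!\, i_{1}\cdots i_{b}}$ with $(i_{k},p)=1$, which I expect to be precisely the $b$-th divided power of the Frobenius-corrected $p$-adic logarithm (hence the reduction of $n_{d'}$ by the amount $a+b$). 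Reorganizing by the $e_{1}$-segment decomposition of $(e_{0}^{n_{d'}-1-a-b}e_{1}\cdots e_{0}^{n_{1}-1}e_{1})^{\rev}$ then separates the adjoint factors $\prod_{k}(\zeta_{p}^{\De})^{\Ad}(w_{k})$ from the remaining binomial harmonic sum $h^{B}_{w/(w_{k})_{k}}(j)$, giving the stated closed form after collecting the signs $(-1)^{\sum_{i} n_{i}-a-b}$ and $(-1)^{d'-r}$.

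The main obstacle I expect is the analytic heart of this last step: proving that $\lim_{|m|_{p}\to 0}$ exists and equals the associator coefficient. This rests on the $p$-adic convergence of the binomial multiple harmonic sums $h^{B}$ — indeed the whole point of passing to the binomial variant is that these sums, unlike the ordinary $h$, admit a well-behaved limit as $|m|_{p}\to 0$ — together with uniform estimates guaranteeing that the infinite reorganization of non-commutative series commutes with the limit. Tracking the exact powers of $p$ and the signs through the iteration is combinatorially delicate, but once the single functional equation and the convergence of $h^{B}$ are in hand, this bookkeeping should be forced by the Hopf-algebra (coproduct/shuffle) structure on $\mathbb{Q}_{p}\langle e_{0},e_{1}\rangle$.
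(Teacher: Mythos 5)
Your outline reproduces the paper's geometric and combinatorial skeleton faithfully: the automorphism $z\mapsto \frac{z}{z-1}$ exchanging $1$ and $\infty$ (Lemmas \ref{lem 1} and \ref{lem: new formula for Gdag}), the single Frobenius functional equation in which the associator enters only through the conjugation $(\Phi_p^{\De})^{-1}e_1\Phi_p^{\De}$ (Proposition \ref{prop sec 1}), the coefficient extraction by deconcatenation at the cut $d'$ together with the $e_1$-segment/contraction combinatorics (Proposition \ref{prop sec 2}), and the identification of the Taylor coefficients of $\Li_w\big(\frac{z}{z-1}\big)$ and $\Li_w\big(\frac{z^p}{z^p-1}\big)$ with binomial multiple harmonic sums (Lemma \ref{lemma z/z-1}). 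Up to that point you are on the paper's track, modulo minor conflations (the horizontal section $G_0^\alpha$ and the overconvergent series $G_0^\dagger$ are distinct objects, related by equation \eqref{eq:overconvergent polylog}).

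The genuine gap is exactly at what you call the analytic heart, and the mechanism you propose for it would not work. The paper's bridge from the Taylor coefficients at $z=0$ to the value at $z=\infty$ is Mahler's theorem (Theorem \ref{prop Mahler}): if $f$ is analytic on $\mathbb{P}^{1,an}\setminus D(1,1)$ with expansion $\sum_n a_nz^n$ at $0$, then $n\mapsto a_n$ extends continuously to $\mathbb{Z}_p$ and $f(\infty)=-\lim_{|n|_p\to 0}a_n$. This applies because the right-hand side of Proposition \ref{prop sec 1}, viewed as a function of $z$ before evaluation at $\infty$, is precisely $G_0^\dagger(e_0,e_\infty)(z)$, whose coefficients are overconvergent, i.e.\ analytic on $\mathbb{P}^{1,an}\setminus D(1,1)$. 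This single input simultaneously (a) proves that the limit $\lim_{|m|_p\to 0}$ exists, (b) identifies it with the associator coefficient, and (c) supplies the minus sign that combines with $\zeta_p^{\De}(w)=(-1)^{\dep(w)}(\Phi_p^{\De})_w$ to produce the prefactor in the theorem. Your proposal instead grounds the existence of the limit in ``the $p$-adic convergence of the binomial multiple harmonic sums $h^B$'' plus ``uniform estimates'' allowing an interchange of limits; this is backwards: the $p$-adic continuity in $m$ of these combinations of $h^B$'s is a \emph{consequence} of overconvergence plus Mahler, not an available input, and nothing in your outline supplies it. Relatedly, your plan to ``read off the single functional equation at the truncation parameter $m$ and solve it directly'' is not meaningful as stated, since in Proposition \ref{prop sec 1} the unknown $\Phi_p^{\De}$ sits on the left-hand side with no $z$-dependence, so there is no coefficient of $z^m$ to compare; what the paper actually does is compute the full expansion of the right-hand side (equation \eqref{eq:3.5}) and then invoke Mahler's theorem to evaluate it at $\infty$.
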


Here, for $w$ a word on $\{e_{0},e_{1}\}$, $w^{\rev}$ is the word reversed, i.e. read backwards ; the notions of $e_1$-segment $(w_{k})_{k}$ of a word $w$ and of the contraction $w/(w_{k})_{k}$ are defined in Definition \ref{def: segment} ; and we have $n_0 = 1$. The $p$-adic numbers $ (\zeta_{p}^{\De})^{\Ad}(w_{k})$, defined as {\it adjoint $p$-adic multiple zeta values} in \cite{J4}, are certain degree two polynomial of $p$-adic multiple zeta values (see equations \eqref{eq:adjoint pMZVs 0}, \eqref{eq:adjoint pMZVs}).

The formula of the theorem is a $p$-adic analogue of equation (\ref{eq:series}). It also implies a formula for the first author's notion of $p$-adic multiple zeta values, the numbers $\zeta_{p}^{\KZ}(n_{1},\ldots,n_{d}) \in \mathbb{Q}_{p}$, via Theorem 2.8 of \cite{F07}, which relates the two families of numbers $\zeta_{p}^{\KZ}(w)$ and $\zeta_{p}^{\De}(w)$.

The formula is inductive with respect to the depth of indices. In depth one and two, for example, we obtain the following formulae :

\begin{ex} \label{ex depth1}
For $n \in \mathbb{N}_{\geq 1}$,
we have
\begin{equation*}
\zeta_{p}^{\De}(n) = -\lim_{|m|_{p}\rightarrow 0} \bigg( h^{B}_{e_{0}^{n-1}e_1}(m) +\sum_{0\leq b \leq n-1} \sum_{\substack{l,j\geq 0 \\ l+pj=m}}  \sum_{\substack{i_{1}+\cdots+i_{b}=l \\ \forall j,  p\nmid i_{j} }} \frac{1}{b!\cdot i_{1}\cdots i_{b}} (\frac{-1}{p})^{n-b} h^{B}_{e_{1}e_{0}^{n-1-b}}(j) \bigg)
\end{equation*} 
\end{ex}

\begin{ex} \label{ex depth2}
For $(n_{1},n_{2}) \in \mathbb{N}_{\geq 1}$,
we have
 \begin{multline*}  \zeta_{p}^{\De}(n_{1},n_{2}) = \lim_{|m|_{p}\rightarrow 0} \bigg\{ \sum_{\substack{0\leq a,b \\a+b \leq n_{1}-1}} \sum_{\substack{i,l,j\geq 0 \\ i+l+pj=m}} (\frac{-1}{p})^{n_{1}-b} h^{B}_{e_{0}^{n_{2}-1}e_{1}e_{0}^{a}}(i) \sum_{\substack{i_{1}+\cdots+i_{b}=l \\ \forall j,  p\nmid i_{j} }} \frac{1}{b!\cdot i_{1}\cdots i_{b}}  h^{B}_{e_{0}^{n-1-b}e_{1}}(j)
\\ - 
\sum_{\substack{b\geq 0 \\ b \leq n_{d'}-1}}
\sum_{\substack{0\leq j,l \\ l+pj = m}}
\sum_{\substack{i_1+\cdots+i_b=l \\ (i_1,p)=\cdots=(i_b,p)=1}}\frac{1}{b!\cdot i_1\cdots i_b} \cdot
\sum_{k=0}^{n_{2}-2-b} 
\frac{(-1)^{n_1+n_2-b}}{p^{k+1}}
{-n_{1} \choose n_{2}-1-b-k} \zeta_{p}(n_{1}+n_{2}-1-b-k)
 h^{B}_{e_{1}e_{0}^{k}} (j)
\\ +
\sum_{\substack{b\geq 0 \\ b \leq n_{2}-1}}
\sum_{\substack{0\leq j,l \\ l+pj = m}} 
\sum_{\substack{i_1+\cdots+i_b=l \\ (i_1,p)=\cdots=(i_b,p)=1}}\frac{1}{b!\cdot i_1\cdots i_b} \cdot
\frac{(-1)^{n_{1}}}{p^{n_{2}-1}} 
\cdot h^{B}_{e_{1}e_{0}^{n_{1}-1}e_{1}e_{0}^{n_{2}-1-b}}
\bigg\}
\end{multline*}

\end{ex}

{\it Acknowledgments.} This work was done during the second author's stay in Nagoya University in June and July of 2025. 
The first author has been supported by grants JSPS KAKENHI  
JP21H00969 and JP21H04430.
The second author thanks Nagoya University for hospitality.

\section{$p$-adic multiple zeta values as certain limits of $p$-adic multiple polylogarithms}\label{sec:1}

$p$-adic multiple zeta values in the sense of the first author are certain limits of $p$-adic multiple polylogarithms when $z\rightarrow1$ (equation \eqref{eq:lim1}). We write a variant of that formula for $p$-adic multiple zeta values in the sense of Deligne with overconvergent $p$-adic multiple polylogarithms (Proposition  \ref{prop sec 1}). The overconvergence allows then to relate their power series expansion at $z=0$ with $p$-adic multiple zeta values. The particularity of our approach is a certain way to use the automorphism $z \mapsto \frac{z}{z-1}$ of $\mathbb{P}^{1} \setminus \{0,1,\infty\}$ which exchanges $1$ and $\infty$ to calculate the limit.

\begin{notation} For $R$ a ring, $R\langle \langle e_{0},e_{1}\rangle\rangle$ is the non-commutative $R$-algebra of formal power series over the non commuting variables $e_{0},e_{1}$. An element $P$ of $R\langle \langle e_{0},e_{1}\rangle\rangle$ is denoted by $P =\sum\limits_{w\text{ word on }\{e_{0},e_{1}\}} P_{w} w$ with $P_{w} \in R$ for all $w$. 

For $w$ a word on $\{e_{0},e_{1}\}$, including the empty word denoted by $1$, the {\it weight} of $w$ is its length, which we denote it by $\wt(w)$.
The {\it depth} of $w$ is its degree with respect to $e_1$ which we denote it by $\dep (w)$.
\end{notation}

We fix $\alpha\in\mathbb C_p$ (called a branch parameter) and consider the associated $p$-adic logarithm
$\log_p^\alpha$ , which is the locally analytic group homomorphism 
${\mathbb C_p}^\times\to{\mathbb C}_p$ 
whose Taylor expansion at $1$ coincides with the classical power series, 
and satisfies $\log_p^\alpha(p)=\alpha$.
The construction of the associated $p$-adic multiple polylogarithm $\Li^\alpha_{k_,..,k_m}(z)$
for $k_1,...,k_m$,
which is a function on ${\mathbb P}^1(\mathbb C_p)\setminus\{1,\infty\}$,
together with their generating series $G^\alpha_{0}(z)$ taking values in
$\mathbb C_p\langle\langle e_0,e_1\rangle\rangle$
is introduced in \cite{F04}. 
In detail, for any word $w$ in $e_0$ and $e_1$, 
the coefficient $G^\alpha_0(z)_w$, 
of  $w$ in $G^\alpha_0(z)$, 
is a Coleman function on $\mathbb{P}^1 \setminus \{0,1,\infty\}$ as explained in loc.~cit.
Particularly, for all $r\geq 1$, and $n_{1},\ldots,n_{r} \geq 1$, and for $|z|_{p}<1$, we have 
\begin{equation}
\label{eq:polylog}
\displaystyle 
G^\alpha_{0}(z)_{e_0^{n_{d}-1}e_1\cdots e_0^{n_{1}-1}e_1} = (-1)^d \Li^\alpha_{n_1,\ldots,n_d}(z)
=\displaystyle(-1)^d\sum_{0<m_{1}<\dots < m_{d}} \frac{z^{m_{d}}}{m_{1}^{n_{1}} \cdots m_{d}^{n_{d}}} 
\end{equation}
And we also have $G^\alpha_{0}(z)_{e_0} = \log^\alpha_{p}(z)$. 

In \cite[\S 3.1]{F04},  it is also introduced the $p$-adic KZ associator 
\begin{equation}\label{eq:lim1}
    \Phi_{p}^{\KZ} :=\underset{z\rightarrow 1}{\lim}^\prime G^\alpha_0(z)
\in \mathbb{Q}_{p} \langle \langle e_0,e_1 \rangle \rangle,
\end{equation}
which is the generating series of $p$-adic multiple zeta values.
Here the notation $\lim^{\prime}$ is defined in \cite[Notation 2.12]{F04}.
We have 
$\zeta_{p}^{\KZ}(n_{1},\ldots,n_{d}) =(-1)^d (\Phi_{p}^{\KZ})_{e_0^{n_{d}-1}e_1 \cdots e_0^{n_{1} - 1}e_1}$
for $d \in \mathbb{N}_{\geq 1}$ and $(n_1,\dots,n_d)\geq \mathbb{N}_{\geq 1}^{d}$.

We note that the series $G^\alpha_{0}$ depends on the choice of a branch $\alpha$ of the $p$-adic logarithm, while $\Phi_{p}^{\KZ}$ does not depend on this choice. 

We consider the generating series $G_{0}^{\dagger}(z)$ of
overconvergent $p$-adic multiple polylogarithms (cf. ~\cite[\S 19.6]{Del89} and \cite[\S 2.1]{F07} ).
The series takes values in $\mathbb C_p\langle\langle e_0,e_1\rangle\rangle$
and is defined for $z$ in $\mathbb{P}_{\mathbb C_p}^{1,an} \setminus D(1,1)$
where $D(1,1)=\{z \in \mathbb{C}_{p}\text{ }|\text{ }|z-1|_{p} < 1\}$ 
denotes the open disk of center $1$ and radius $1$.
For any word $w$, $G_{0}^{\dagger}(z)_w$ is an overconvergent analytic function on $\mathbb{P}_{\mathbb C_p}^{1,an} \setminus D(1,1)$. 
Let $\Phi_{p}^{\De} \in \mathbb{Q}_{p} \langle \langle e_0,e_1 \rangle \rangle$ be the series in \cite[Definition 2.7]{F07}
that generates Deligne's $p$-adic multiple zeta values, as defined in \cite[\S5.26]{DG}; 
one has $\zeta_{p}^{\De}(n_{1},\ldots,n_{d}) = (-1)^d(\Phi_{p}^{\De})_{e_0^{n_{d}-1}e_1 \cdots e_0^{n_{1} - 1}e_1}$
for all $d\in \mathbb{N}_{\geq 1}$, and $(n_{1},\ldots,n_{d}) \in \mathbb{N}_{\geq 1}^{d}$. 
The series $G_{0}^{\dagger}$ is given by the following formula (\cite[Theorem 2.14]{F07}) :

\begin{equation} 
G_{0}^{\dagger}(e_0,e_1)(z) = G^\alpha_{0}(e_0,e_1)(z) \cdot G^\alpha_{0}\left( \frac{e_0}{p} , \Phi_p^{\De}(e_0,e_1)^{-1} \frac{e_1}{p} \Phi_p^{\De}(e_0,e_1)\right) (z^{p})^{-1}  
\label{eq:overconvergent polylog}
\end{equation}

Let $e_\infty=-e_0-e_1$. 

\begin{lem} \label{lem 1}
We have $\Phi_{p}^{\mathrm{De}}(e_{0},e_{1}) = G_{0}^{\dagger}(e_{0},e_{\infty})(\infty)$.
\end{lem}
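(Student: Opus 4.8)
The engine of the proof is the behaviour of the KZ connection form under the involution $\sigma\colon z\mapsto \frac{z}{z-1}$ of $X=\mathbb{P}^{1}\setminus\{0,1,\infty\}$, which fixes $0$ and exchanges $1$ and $\infty$. From $\frac{d\sigma}{\sigma}=\frac{dz}{z}-\frac{dz}{z-1}$ and $\frac{d\sigma}{\sigma-1}=-\frac{dz}{z-1}$ one obtains
\[
\sigma^{*}\Big(e_{0}\tfrac{dz}{z}+e_{1}\tfrac{dz}{z-1}\Big)=e_{0}\tfrac{dz}{z}-(e_{0}+e_{1})\tfrac{dz}{z-1}=e_{0}\tfrac{dz}{z}+e_{\infty}\tfrac{dz}{z-1},
\]
so pulling back along $\sigma$ realises exactly the substitution $e_{1}\mapsto e_{\infty}$ in the connection form. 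This single computation is the reason the letter $e_{\infty}=-e_{0}-e_{1}$ and the evaluation at $\infty$ appear in the statement, and it is what I would establish first.

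I would then apply this to the series $G^{\alpha}_{0}(e_{0},e_{1})$, which, unlike the overconvergent series, is genuinely horizontal for the KZ connection (its $e_{0}$-coefficient is $\log^{\alpha}_{p}(z)$, not $0$). The two series $z\mapsto G^{\alpha}_{0}(e_{0},e_{1})(\sigma(z))$ and $z\mapsto G^{\alpha}_{0}(e_{0},e_{\infty})(z)$ are then horizontal for one and the same connection, hence differ by right multiplication by a constant, which I would pin down to be $1$ by evaluating at the fixed point $z=0$: the comparison of the two normalisations at the tangential base point there reduces to $\lim_{z\to0}(\sigma(z)/z)^{e_{0}}=(-1)^{e_{0}}=\exp(e_{0}\log^{\alpha}_{p}(-1))=1$, using $\log^{\alpha}_{p}(-1)=0$. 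This yields the functional equation $G^{\alpha}_{0}(e_{0},e_{1})(\sigma(z))=G^{\alpha}_{0}(e_{0},e_{\infty})(z)$. Since $\sigma(\infty)=1$, evaluating at $z=\infty$ converts the regularised limit $\underset{z\rightarrow 1}{\lim}{}^{\prime}\,G^{\alpha}_{0}(e_{0},e_{1})$, which computes $\Phi^{\KZ}_{p}$ by \eqref{eq:lim1}, into the regularised value of $G^{\alpha}_{0}(e_{0},e_{\infty})$ at $\infty$; more generally it gives $G^{\alpha}_{0}(a,c)(\infty)=\Phi^{\KZ}_{p}(a,-(a+c))$. Feeding these values into the defining relation \eqref{eq:overconvergent polylog} evaluated at $z=\infty$ (where $z^{p}=\infty$, so the Frobenius factor is evaluated at $\infty$ as well) should assemble, through the comparison \eqref{eq:overconvergent polylog} between $\Phi^{\KZ}_{p}$ and $\Phi^{\De}_{p}$, into $\Phi^{\De}_{p}(e_{0},e_{1})=G^{\dagger}_{0}(e_{0},e_{\infty})(\infty)$; this last value is legitimate as an honest evaluation because $\infty\notin D(1,1)$ and the Frobenius factor cancels the singularity of the individual factors at $\infty$, just as it forces the $e_{0}$-coefficient of $G^{\dagger}_{0}$ to vanish.

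The main obstacle is precisely this transfer from $G^{\alpha}_{0}$ to the overconvergent $G^{\dagger}_{0}$. The overconvergent series is not horizontal for the KZ connection, so the clean $\sigma$-functional equation cannot be applied to it directly, and one is forced to route the argument through $G^{\alpha}_{0}$ and \eqref{eq:overconvergent polylog}. The delicate point is that $\sigma$ does not commute with the chosen Frobenius lift $z\mapsto z^{p}$ except at the two fixed-type points $0$ and $\infty$ (where $z^{p}\in\{0,\infty\}$), which is exactly why the evaluation must be taken at $\infty$ rather than at a general point. Making the final assembly precise requires the associator identity describing how $\Phi^{\De}_{p}(e_{0},e_{\infty})$ conjugates $e_{\infty}$ inside the Frobenius factor of \eqref{eq:overconvergent polylog}, and it is there that I expect the bulk of the bookkeeping to sit.
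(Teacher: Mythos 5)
Your proposal is correct and takes essentially the same route as the paper's second proof of this lemma: your connection-form pullback establishes the functional equation \eqref{eq: relation for G0 at 0 and infinity} (which the paper instead cites from \cite[\S 2.2]{F22}), the ``associator identity'' you anticipate for the conjugated $e_{\infty}$ in the Frobenius factor is exactly \eqref{eq:residue sum}, valid because $\Phi_{p}^{\De}\in\mathrm{GRT}(\mathbb{Q}_{p})$ (Drinfeld's Proposition 5.6), and the final assembly is \eqref{eq:lim1} together with Theorem 2.8 of \cite{F07} --- the latter being a separate input rather than a consequence of \eqref{eq:overconvergent polylog} alone, as your wording suggests. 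Note also that the paper gives a second, purely Tannakian proof, using that $(z\mapsto\frac{z}{z-1})_{*}$ acts on the de Rham fundamental groupoid and commutes with the crystalline Frobenius, which bypasses all of this bookkeeping.
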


We give two proofs of this lemma.

\begin{proof} Let $\Pi$ be the unipotent de Rham fundamental groupoid of  $(\mathbb{P}^{1} \setminus \{0,1,\infty\} )/ \mathbb{Q}_{p}$ (\cite[\S10]{Del89}). For $x,y$ two base points of $\Pi$ we denote by ${}_y 1 _{x}$ the canonical de Rham path from $x$ to $y$ (\cite[\S12]{Del89}). For $x \in \{0,1,\infty\}$ and $\vec{v} \in \mathbb{Q}_{p}^{\ast}$, we denote by $\vec{v}_{x}$ the tangential base point $\vec{v}$ at $x$ 
(\cite[Definition 2.7]{F07}).

We have, by definition, $\Phi_{p}^{\mathrm{De}} = \phi_{p} (_{\vec{1}_{1}} 1_{\vec{1}_{0}} )$ where $\phi_{p}$ is the crystalline Frobenius automorphism \cite[Definition 2.7]{F07}. The automorphism $z \mapsto \frac{z}{z-1}$ of $\mathbb{P}^{1} \setminus \{0,1,\infty\}$ induces an automorphism $(z\mapsto \frac{z}{z-1})_{\ast}$ of $\Pi$, which commutes with the crystalline Frobenius. Thus we have 

$$ (z\mapsto \frac{z}{z-1})_{\ast}\phi_{p} (_{\vec{1}_{1}} 1_{\vec{1}_{0}} ) = \phi_{p} \Big((z\mapsto \frac{z}{z-1})_{\ast} (_{\vec{1}_{1}} 1_{\vec{1}_{0}}) \Big)  $$

The left hand side above is $\Phi_{p}^{\mathrm{De}}(e_{0},e_{\infty})$. The right-hand side is $\phi_{p} (_{\vec{1}_{\infty}} 1_{\vec{1}_{0}} ) =G_{0}^{\dagger}(\infty)$. Thus we have $\Phi_{p}^{\mathrm{De}}(e_{0},e_{\infty}) = G_{0}^{\dagger}(e_{0},e_{1})(\infty)$. By applying the algebra involutive automorphism of $\mathbb{Q}_{p} \langle\langle e_{0},e_{1}\rangle\rangle$ which maps $(e_{0},e_{1}) \mapsto (e_{0},e_{\infty})$, we deduce the result.
\end{proof}

Below we provide another proof based on the techniques of \cite{F04, F07}.

\begin{proof}
By equation \eqref{eq:overconvergent polylog} we have:
$$ G_{0}^\dag(e_0,e_\infty)(\infty)
   = \underset{z\rightarrow \infty}{\lim}^\prime  G_{0}^\dag(e_0,e_\infty)(z) = \underset{z\rightarrow \infty}{\lim}^\prime 
     G^\alpha_{0}(e_0,e_\infty)(z) \cdot G^\alpha_{0}\left( \frac{e_0}{p} , \Phi_p^{\De}(e_0,e_\infty)^{-1} \frac{e_\infty}{p} \Phi_p^{\De}(e_0,e_\infty)\right)(z^p)^{-1} $$
By the variable change $z=\frac{y}{y-1}$, which exchanges $1$ and $\infty$, we deduce
$$ G_{0}^\dag(e_0,e_\infty)(\infty) = \underset{y\rightarrow 1}{\lim}^\prime 
     G^\alpha_{0}(e_0,e_\infty)\Big(\frac{y}{y-1}\Big) \cdot  G^\alpha_{0}\left( \frac{e_0}{p} , \Phi_p^{\De}(e_0,e_\infty)^{-1} \frac{e_\infty}{p} \Phi_p^{\De}(e_0,e_\infty)\right)\Big((\frac{y}{y-1})^p\Big)^{-1} $$

By \cite[\S 2.2]{F22}, we also have
\begin{equation}\label{eq: relation for G0 at 0 and infinity}
    G^\alpha_0(e_0,e_1)(z)=G^\alpha_0(e_0,e_\infty)(\frac{z}{z-1}).
\end{equation}

Of course, this equality remains true if we replace $e_{0},e_{1},e_{\infty}$ by any series $A,B,C$ in $\mathbb Q\langle\langle e_0,e_1\rangle\rangle$
without constant terms such that $A+B+C=0$. We deduce :

$$ G_{0}^\dag(e_0,e_\infty)(\infty) = \underset{y\rightarrow 1}{\lim}^\prime 
     G^\alpha_{0}(e_0,e_1)(y) \cdot  G^\alpha_{0}\left( \frac{e_0}{p} , - \frac{e_0}{p} - \Phi_p^{\De}(e_0,e_\infty)^{-1} \frac{e_1}{p} \Phi_p^{\De}(e_0,e_\infty)\right)\Big(\frac{y^p}{y^{p} - (y-1)^p}\Big)^{-1} $$

We also have 
\begin{equation} \label{eq:residue sum}e_0+\Phi_{p}^{\De}(e_0,e_1)^{-1}e_1\Phi_{p}^{\De}(e_0,e_1)+\Phi_{p}^{\De}(e_0,e_\infty)^{-1}e_\infty\Phi_{p}^{\De}(e_0,e_\infty)=0
\end{equation}

Indeed $\Phi_{p}^{\De}(e_0,e_1)$ is in Drinfeld's \cite{Dr} group $\mathrm{GRT}(\mathbb Q_p)$ by \cite{F07, U13}, which implies this equality \cite[Proposition 5.6]{Dr}.
Thus we deduce :
$$ G_{0}^\dag(e_0,e_\infty)(\infty) = \underset{y\rightarrow 1}{\lim}^\prime 
     G^\alpha_{0}(e_0,e_1)(y) \cdot  G^\alpha_{0}\left( \frac{e_0}{p} , \Phi_p^{\De}(e_0,e_1)^{-1} \frac{e_1}{p} \Phi_p^{\De}(e_0,e_1)\right)\Big(\frac{y^p}{y^{p} - (y-1)^p}\Big)^{-1}. $$

By equation  \eqref{eq:lim1} 
and
$\displaystyle \Phi_p^\De(e_0,e_1)=\Phi_p^\KZ(e_0,e_1)\Phi_p^\KZ( \frac{e_0}{p} , \Phi_p^{\De}(e_0,e_1)^{-1} \frac{e_1}{p} \Phi_p^{\De}(e_0,e_1))^{-1}$ shown in \cite[Theorem 2.8]{F07}, we obtain the result.

\end{proof}

\begin{lem}\label{lem: new formula for Gdag}
We have
\begin{equation*}
G_0^\dag(e_0,e_\infty)(z)=G^\alpha_{0}(e_0,e_1)\Big(\frac{z}{z-1}\Big)\cdot  G^\alpha_{0}\left( \frac{e_0}{p} , \Phi_p^{\De}(e_0,e_1)^{-1} \frac{e_1}{p} \Phi_p^{\De}(e_0,e_1)\right) \bigg(\frac{z^{p}}{z^{p}-1}\bigg)^{-1}.
\end{equation*}
\end{lem}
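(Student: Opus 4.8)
The plan is to derive the identity by formally manipulating the three functional equations already at hand: the defining formula \eqref{eq:overconvergent polylog} for $G_0^\dagger$, the inversion relation \eqref{eq: relation for G0 at 0 and infinity}, and the residue relation \eqref{eq:residue sum}. Indeed, this statement is nothing but the ``pre-limit'' form of the computation carried out in the second proof of Lemma \ref{lem 1}, stopped before letting $z\to\infty$.

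First I would substitute $e_1\mapsto e_\infty=-e_0-e_1$ into \eqref{eq:overconvergent polylog}. Since that equation holds in $\mathbb{C}_p\langle\langle e_0,e_1\rangle\rangle$ and the involutive algebra automorphism sending $e_1\mapsto e_\infty$ and fixing $e_0$ may be applied to both sides (this is precisely the substitution already used at the start of the second proof of Lemma \ref{lem 1}), this gives
\[
G_0^\dagger(e_0,e_\infty)(z)=G^\alpha_0(e_0,e_\infty)(z)\cdot G^\alpha_0\left(\frac{e_0}{p},\,\Phi_p^{\De}(e_0,e_\infty)^{-1}\frac{e_\infty}{p}\Phi_p^{\De}(e_0,e_\infty)\right)(z^p)^{-1}.
\]
I would then transform the two factors separately. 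For the first factor, since $z\mapsto\frac{z}{z-1}$ is an involution, applying \eqref{eq: relation for G0 at 0 and infinity} with $z$ replaced by $\frac{z}{z-1}$ yields $G^\alpha_0(e_0,e_\infty)(z)=G^\alpha_0(e_0,e_1)\big(\frac{z}{z-1}\big)$, which is exactly the first factor of the target. For the second factor, I would invoke the generalized form of \eqref{eq: relation for G0 at 0 and infinity} for the triple $A=\frac{e_0}{p}$, $B=\Phi_p^{\De}(e_0,e_\infty)^{-1}\frac{e_\infty}{p}\Phi_p^{\De}(e_0,e_\infty)$, $C=-A-B$, which have no constant term and satisfy $A+B+C=0$, to obtain $G^\alpha_0(A,B)(z^p)=G^\alpha_0(A,C)\big(\frac{z^p}{z^p-1}\big)$; here the change of argument is immediate since $w=z^p$ gives $\frac{w}{w-1}=\frac{z^p}{z^p-1}$. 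Dividing \eqref{eq:residue sum} by $p$ identifies $C=\Phi_p^{\De}(e_0,e_1)^{-1}\frac{e_1}{p}\Phi_p^{\De}(e_0,e_1)$, so after inverting, the second factor becomes $G^\alpha_0\big(\frac{e_0}{p},\Phi_p^{\De}(e_0,e_1)^{-1}\frac{e_1}{p}\Phi_p^{\De}(e_0,e_1)\big)\big(\frac{z^p}{z^p-1}\big)^{-1}$. Multiplying the two transformed factors produces the claimed formula.

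The computation is short once the substitutions are in place, so the genuine content lies in two points. The conceptual crux is recognizing that \eqref{eq:residue sum}, which encodes that $\Phi_p^{\De}$ lies in $\mathrm{GRT}(\mathbb{Q}_p)$, is exactly the relation converting the $e_\infty$-conjugate in the second slot back into the $e_1$-conjugate. The remaining, more technical, obstacle is to justify these as identities of the actual overconvergent Coleman functions rather than mere formal bookkeeping: I would need to check that $z\mapsto\frac{z}{z-1}$ maps the domain $\mathbb{P}_{\mathbb{C}_p}^{1,an}\setminus D(1,1)$ of $G_0^\dagger$ into the region where the right-hand side factors are defined, and that \eqref{eq: relation for G0 at 0 and infinity}, established in \cite{F22} near $z=0$, propagates to the relevant locus by the rigidity of Coleman functions. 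Granting this, assembling the three cited equations gives the result.
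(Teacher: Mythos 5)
Your proposal is correct and takes essentially the same approach as the paper's proof: apply the substitution $(e_{0},e_{1})\mapsto(e_{0},e_{\infty})$ to equation \eqref{eq:overconvergent polylog}, then rewrite the first factor via \eqref{eq: relation for G0 at 0 and infinity} and the second factor via its generalized form for triples $A+B+C=0$ combined with \eqref{eq:residue sum}. The paper's proof is simply a terser version of this same computation, and your added remarks (identifying $C$ by dividing \eqref{eq:residue sum} by $p$, the involution argument for the first factor, and the domain check) are exactly the details it leaves implicit.
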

\begin{proof} We apply the automorphism $(e_{0},e_{1})\mapsto (e_{0},e_{\infty})$ to equation (\ref{eq:overconvergent polylog}). By equation \eqref{eq: relation for G0 at 0 and infinity} and equation \eqref{eq:residue sum}, the right-hand side becomes
\begin{align*}
G^\alpha_{0}(e_0,e_\infty)(z) &\cdot G^\alpha_{0}\left( \frac{e_0}{p} , \Phi_p^{\De}(\frac{e_0}{p},e_\infty)^{-1} \frac{e_\infty}{p} \Phi_p^{\De}(e_0,e_\infty)\right) (z^{p})^{-1}  \\
&= G^\alpha_{0}(e_0,e_1)(\frac{z}{z-1})\cdot  G^\alpha_{0}\left( \frac{e_0}{p} , \Phi_p^{\De}(e_0,e_1)^{-1} \frac{e_1}{p} \Phi_p^{\De}(e_0,e_1)\right) \bigg(\frac{z^{p}}{z^{p}-1}\bigg)^{-1}.
\end{align*}
Whence we obtain the claim.
\end{proof}

Let $G_{0}^{\alpha, \reg}$
be the regularization of $G^\alpha_{0}$ in $z=0$. It is defined by $G^\alpha_{0}(e_0,e_1)(z) = G_{0}^{{\alpha, \reg}}(e_0,e_1)(z) \exp\{\log^\alpha_{p}(z)e_0\}$. 
We have $G^\alpha_{0} (z)_w = G_{0}^{\alpha, \reg} (z)_w$ for all words $w$ of the form $w' e_1$ where $w'$ is a word, $G_{0}^{\alpha, \reg}(z)_{e_0}  = 0$ and $G_{0}^{\alpha, \reg}$ 
is group-like, i.e. its coefficients
satisfy the shuffle equation.

\begin{notation} Let $\Li_{1}^{(p)}(z) = \Li^\alpha_{1}(z) - \frac{1}{p}\Li^\alpha_{1}(z^{p})$ for $z \in \mathbb{C}_{p} \setminus \{1\}$.
In the region this function is independent of $\alpha$.
\end{notation}

\begin{prop} \label{prop sec 1}
We have 
\begin{multline} \label{eq:lim2}
\Phi_{p}^{\De}(e_0,e_1) = 
\\ G_{0}^{\alpha, \reg}(e_0,e_1)\Big(\frac{z}{z-1}\Big)  \cdot \exp \bigg(\Li_{1}^{(p)}(z) e_{0} \bigg) \cdot G_{0}^{\alpha, \reg}\left( \frac{e_0}{p} , \Phi_p^{\De}(e_0,e_1)^{-1} \frac{e_1}{p} \Phi_p^{\De}(e_0,e_1)\right)\Big(\frac{z^{p}}{z^{p}-1}\Big)^{-1}\Bigm|_{z=\infty}.
\end{multline}
\end{prop}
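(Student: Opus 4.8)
The plan is to combine the two preceding lemmas with the regularization of $G_0^\alpha$ at $z=0$, and then to identify the resulting $e_0$-exponential with $\exp(\Li_1^{(p)}(z)\,e_0)$ by a short computation using only the homomorphism property of $\log_p^\alpha$.

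First I would recall from Lemma~\ref{lem 1} that $\Phi_p^\De(e_0,e_1)=G_0^\dagger(e_0,e_\infty)(\infty)$, and from Lemma~\ref{lem: new formula for Gdag} that, writing $B=\Phi_p^\De(e_0,e_1)^{-1}\frac{e_1}{p}\Phi_p^\De(e_0,e_1)$,
$$G_0^\dagger(e_0,e_\infty)(z)=G_0^\alpha(e_0,e_1)\Big(\tfrac{z}{z-1}\Big)\cdot G_0^\alpha\Big(\tfrac{e_0}{p},B\Big)\Big(\tfrac{z^p}{z^p-1}\Big)^{-1}$$
as an identity of functions of $z$. Thus it suffices to rewrite this right-hand side in regularized form and then pass to $z=\infty$.

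Next I would substitute the defining relation $G_0^\alpha(e_0,e_1)(z)=G_0^{\alpha,\reg}(e_0,e_1)(z)\exp(\log_p^\alpha(z)\,e_0)$ of the regularization at $0$ into each factor, in the second one replacing $(e_0,e_1)$ by $(\tfrac{e_0}{p},B)$ and $z$ by $\tfrac{z^p}{z^p-1}$, so that its exponential direction becomes $\tfrac{e_0}{p}$. Inverting the second factor moves its exponential to the left; since both exponentials involve only $e_0$ they commute and combine into a single factor $\exp\big(\big[\log_p^\alpha(\tfrac{z}{z-1})-\tfrac1p\log_p^\alpha(\tfrac{z^p}{z^p-1})\big]e_0\big)$ sitting between the two regularized series. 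The key computation is then that this exponent equals $\Li_1^{(p)}(z)$: using that $\log_p^\alpha$ is a group homomorphism with $\log_p^\alpha(-1)=0$ and $\log_p^\alpha(z^p)=p\log_p^\alpha(z)$, the two $\log_p^\alpha(z)$ contributions cancel and one is left with $-\log_p^\alpha(z-1)+\tfrac1p\log_p^\alpha(z^p-1)=-\log_p^\alpha(1-z)+\tfrac1p\log_p^\alpha(1-z^p)$, which is exactly $\Li_1^{(p)}(z)$ since $\Li_1^\alpha(z)=-\log_p^\alpha(1-z)$.

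This produces the factorization of $G_0^\dagger(e_0,e_\infty)(z)$ displayed in the proposition as an identity of functions of $z$; evaluating at $z=\infty$ and invoking Lemma~\ref{lem 1} gives the statement. The point I expect to require the most care is the evaluation at $z=\infty$: the three factors on the right diverge individually there, since the argument $\tfrac{z}{z-1}\to 1$ is a singular point of $G_0^{\alpha,\reg}$ (its $e_1$-coefficient is $\log_p^\alpha(1-\tfrac{z}{z-1})$, which blows up). These divergences cancel precisely because the product is the overconvergent series $G_0^\dagger$, which is analytic at $\infty$; hence ``$|_{z=\infty}$'' must be read as the regularized limit $\lim'_{z\to\infty}$ of the whole product, which agrees with the honest value $G_0^\dagger(e_0,e_\infty)(\infty)$ furnished by Lemma~\ref{lem 1}.
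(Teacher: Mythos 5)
Your proof is correct and follows essentially the same route as the paper's: combine Lemma \ref{lem 1} with Lemma \ref{lem: new formula for Gdag}, insert the regularization $G_0^\alpha=G_0^{\alpha,\reg}\exp(\log_p^\alpha(\cdot)\,e_0)$ into both factors, merge the two adjacent $e_0$-exponentials, and identify the exponent with $\Li_1^{(p)}(z)$ using $\log_p^\alpha(-1)=0$, $\log_p^\alpha(z^p)=p\log_p^\alpha(z)$ and $\Li_1^\alpha(z)=-\log_p^\alpha(1-z)$, before evaluating at $z=\infty$. Your closing observation that $\bigm|_{z=\infty}$ refers to the value of the whole product (which equals the overconvergent series $G_0^\dagger(e_0,e_\infty)(z)$, analytic at $\infty$, even though the individual factors diverge there) is consistent with the paper's treatment.
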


\begin{proof} 

By Lemma \ref{lem 1} and Lemma \ref{lem: new formula for Gdag}, we deduce
\begin{equation} \label{eq:2} 
\Phi_{p}^{\De}(e_{0},e_{1}) = G^\alpha_{0}(e_0,e_1)(\frac{z}{z-1})\cdot  G^\alpha_{0}\left( \frac{e_0}{p} , \Phi_p^{\De}(e_0,e_1)^{-1} \frac{e_1}{p} \Phi_p^{\De}(e_0,e_1)\right) \bigg(\frac{z^{p}}{z^{p}-1}\bigg)^{-1} \Bigm|_{z=\infty}
\end{equation}
By definition, we have
$$ G^\alpha_{0}(e_0,e_1)(z) =  G_{0}^{\alpha, \reg}(e_0,e_1)(z) \cdot\exp( e_0 \log^\alpha_{p}(z)) $$
thus 
$$ G^\alpha_{0}(e_0,e_1)(\frac{z}{z-1}) =  G_{0}^{\alpha, \reg}(e_0,e_1)(\frac{z}{z-1}) \cdot\exp( e_0 \log^\alpha_{p}(\frac{z}{z-1})) $$
and, since $\log^\alpha_{p}(z^{p}) = p \log^\alpha_{p}(z)$,   
\begin{align*}
G^\alpha_{0}&\left( \frac{e_0}{p} ,  \Phi_p^{\De}(e_0,e_1)^{-1} \frac{e_1}{p} \Phi_p^{\De}(e_0,e_1)\right) (\frac{z^{p}}{z^{p}-1})^{-1} 
\\ &= \bigg( G_{0}^{\alpha, \reg}\left( \frac{e_0}{p} , \Phi_p^{\De}(e_0,e_1)^{-1} \frac{e_1}{p} \Phi_p^{\De}(e_0,e_1)\right) (\frac{z^{p}}{z^{p}-1}) \exp( \frac{e_0}{p} \log^\alpha_{p} (\frac{z^{p}}{z^{p}-1})) \bigg)^{-1} 
\\ 
&= \exp ( \frac{e_0}{p} \log^\alpha_{p}(\frac{z^{p}}{z^{p}-1}))^{-1} G_{0}^{\alpha, \reg}\left( \frac{e_0}{p} , \Phi_p^{\De}(e_0,e_1)^{-1} \frac{e_1}{p} \Phi_p^{\De}(e_0,e_1)\right) (\frac{z^{p}}{z^{p}-1})^{-1} 
\end{align*}
and we have 
\begin{eqnarray*} \exp( e_0 \log^\alpha_{p}(\frac{z}{z-1})) \exp(\frac{e_0}{p} \log^\alpha_{p}(\frac{z^{p}}{z^{p}-1}))^{-1} &=& 
\exp \bigg( \Big( -\log_{p}^{\alpha}(z-1) + \frac{1}{p}\log_{p}^{\alpha}(z^{p}-1) \Big) e_{0}\bigg)
\\ &=& \exp \bigg( (\Li^\alpha_{1}(z) - \frac{1}{p}\Li^\alpha_{1}(z^{p})) e_{0} \bigg)
\end{eqnarray*}
because $\log_{p}^{\alpha}(z) = \log_{p}^{\alpha}(-z)$, and $\Li_{1}(z) = - \log_{p}^{\alpha}(1-z)$.
Whence our claim follows.
\end{proof}


\section{Combinatorics of non-commutative formal power series in the limit equation}\label{sec:2}

In this section we formalize the combinatorics of Proposition \ref{prop sec 1}.

\begin{definition} \label{op reg} 
For $P \in \mathbb{Q}_{p}\langle\langle e_{0},e_{1}\rangle\rangle$ and $L \in \mathbb{Q}_{p}[[z]]\langle \langle e_{0},e_{1}\rangle\rangle^\times$ and $L_{1} \in \mathbb{Q}_{p}[[z]]\langle \langle e_{0},e_{1}\rangle\rangle$, with $L_{e_{0}^{n}}=0$ for all $n \in \mathbb{N}_{\geq 1}$, let 
$$P \newaction (L,L_{1})(z) = L(e_{0},e_{1})(\frac{z}{z-1})\cdot \exp(L_{1}(z)e_{0}) \cdot L\left(\frac{e_{0}}{p},\ \frac{P^{-1}e_{1}P}{p}\right)(\frac{z^{p}}{z^{p}-1})^{-1}. $$
\end{definition}

With Definition \ref{op reg}, Proposition \ref{prop sec 1} can be reformulated as 

\begin{equation} \label{eq: part 1} \Phi_{p}^{\De} = \Big( \Phi_{p}^{\De}  \text{ } \newaction \text{ }  (G_{0}^{\alpha, \reg},\Li_{1}^{(p)})(z)\Big)|_{z=\infty}.
\end{equation}

The main result of this section is a formula for the coefficients of $P \newaction (L,L_{1})$, in function of $P$, $L$ and $L_{1}$, at any word $w=e_{0}^{n_{d}-1}e_{1}\cdots e_{0}^{n_{1}-1}e_{1}$ (Proposition \ref{prop sec 2}). 

Let $\mathcal{O}^{\sh}$ be the Hopf shuffle algebra over the alphabet $\{e_{0},e_{1}\}$ over $\mathbb{Q}$.
It means the $\mathbb{Q}$-vector space $\mathbb{Q}\langle e_{0},e_{1}\rangle$ freely generated by words on $\{e_{0},e_{1}\}$, with the shuffle product $\sh$, the deconcatenation coproduct, the antipode $S : w \mapsto (-1)^{\mathrm{wt}(w)} w^{\rev}$, where $w^{\rev}$ is the word $w$ read backwards and the counity which maps an element to the coefficient of the constant word. The weight of words defines a grading on $\mathcal{O}^{\sh}$.

The algebra $\mathbb{Q}_{p}\langle \langle e_{0},e_{1}\rangle\rangle$ has a Hopf algebra structure defined as the completion (with respect to the weight grading) of the dual of $\mathcal{O}^{\sh} \otimes \mathbb{Q}_{p}$, by the pairing $(P,w) \mapsto P_{w}$. In particular, if $P\in \mathbb{Q}_{p}\langle \langle e_{0},e_{1}\rangle\rangle$ is grouplike, we have $P^{-1} = \hat{S^{\vee}}(P)$ where $\hat{S^{\vee}}(P) = \sum\limits_{\substack{w \text{ word on}\\ \{e_{0},e_{1}\}}} (-1)^{\wt(w)} P_w w^{\rev}$.

The next definition is a particular case of \cite[Definition 1.2.2]{J2}. 

\begin{definition}\label{def: segment}
Let $w$ be a word on $\{e_0,e_1\}$ and let $(w_{i})_{i}=(w_{1},\ldots,w_{r})$ be an {\it $e_1$-segment of $w$}, that means it is 
a sequence of subwords of $w$ such that $w$ is of the form $w=a_{0}w_{1}a_{1} \ldots a_{r-1} w_{r} a_{r}$, where the $a_{i}$'s are also words and each $w_{i}$ contains at least one letter $e_1$. 
The {\it contraction of $w$} by $(w_{i})_i$, denoted by $w/(w_{i})_i$ is the word obtained by replacing each $w_{i}$ with $e_1$ in $w$, i.e. $a_{0} e_1 a_{1} \cdots a_{r-1} e_1 a_{r}$.
\end{definition} 

The next lemma and its proof are a particular case of the proof of \cite[Proposition 1.2.3]{J2}. We reproduce it for convenience.

\begin{lem} \label{lem contraction}
Let $A,B \in \mathbb{Q}_{p} \langle \langle e_0,e_1\rangle\rangle$, with $A_{e_0^{n}} = B_{e_0^n} = 0$ for all $n\geq 1$ and $B_\emptyset = 0$. We have, for a word $w$,
$$  A(e_0,B)_w = \sum_{\substack{r\geq 1,  (w_{i})_{i=1}^r
\\ \text{$e_1$-segment of }w}} \bigg( \prod_{i=1}^{r}  B_{ w_{i}} \bigg) A_{w / (w_{i})_i}.
$$
\end{lem}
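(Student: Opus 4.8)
The plan is to compute the left-hand side directly, by expanding the substitution $e_{1}\mapsto B$ inside $A$ and extracting the coefficient of the fixed word $w$, and then to reorganize the resulting sum so that it is manifestly indexed by the $e_1$-segments of $w$. A preliminary point is well-definedness: since $B_{\emptyset}=0$, replacing each $e_1$ by $B$ cannot decrease weight, so for a fixed $w$ only finitely many words $u$ can contribute to $A(e_{0},B)_{w}$. Hence the quantity is a finite sum and all interchanges of summation below are legitimate.

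First I would write $A(e_{0},B)=\sum_{u}A_{u}\,u(e_{0},B)$, the sum running over all words $u$, where $u(e_{0},B)$ is obtained from $u$ by replacing each letter $e_1$ with $B$ and leaving each $e_0$ unchanged. Splitting a depth-$k$ word at its occurrences of $e_1$, I write $u=e_{0}^{b_{0}}e_{1}e_{0}^{b_{1}}\cdots e_{1}e_{0}^{b_{k}}$, so that $u(e_{0},B)=e_{0}^{b_{0}}\,B\,e_{0}^{b_{1}}\cdots B\,e_{0}^{b_{k}}$. Extracting the coefficient of $w$ from this product yields $\sum\prod_{j=1}^{k}B_{\gamma_{j}}$, the sum taken over all factorizations $w=e_{0}^{b_{0}}\gamma_{1}e_{0}^{b_{1}}\cdots\gamma_{k}e_{0}^{b_{k}}$ of $w$ into $k$ blocks $\gamma_{j}$ separated by the prescribed powers of $e_0$. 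Summing over all $u$ then collects, into a single sum, all such factorizations of $w$ over all $k$ and all connector exponents $b_{i}$.

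Next I would invoke the two hypotheses on $B$: as $B_{\emptyset}=0$ and $B_{e_{0}^{n}}=0$ for $n\geq 1$, a factor $B_{\gamma_{j}}$ vanishes unless the block $\gamma_{j}$ contains at least one $e_1$. Thus only factorizations in which every block carries an $e_1$ survive, and the surrounding powers of $e_0$ are precisely the $e_1$-free connectors. Such a factorization is exactly an $e_1$-segment $(\gamma_{1},\ldots,\gamma_{k})$ of $w$ in the sense of Definition~\ref{def: segment}, and the associated word $u=e_{0}^{b_{0}}e_{1}\cdots e_{1}e_{0}^{b_{k}}$ is exactly the contraction $w/(\gamma_{j})_{j}$. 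The double sum therefore matches, term by term, the sum $\sum\big(\prod_{j}B_{\gamma_{j}}\big)A_{w/(\gamma_{j})_{j}}$ over all $e_1$-segments of $w$, which is the right-hand side. The remaining hypothesis $A_{e_{0}^{n}}=0$ disposes of the degenerate case: if $w$ contains no $e_1$, i.e.\ $w=e_{0}^{n}$, the right-hand side is empty hence $0$, while on the left only the depth-$0$ term $u=e_{0}^{n}$ survives and contributes $A_{e_{0}^{n}}=0$ for $n\geq 1$.

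The main bookkeeping obstacle is the reindexing in the third step, namely verifying that assigning to each pair (a word $u$, a factorization of $w$ matching the $e_1$-positions of $u$) the $e_1$-segment $(\gamma_{1},\ldots,\gamma_{k})$ is a bijection onto the $e_1$-segments of $w$, with inverse given by contraction. The only genuine content is that the connectors arising this way are automatically powers of $e_0$: this is what identifies $u$ with the contraction and prevents any single $e_1$ of $w$ from being accounted for both inside a block and inside a connector, which is exactly the phenomenon that would otherwise overcount. Multiplicities of repeated blocks require no separate argument once $e_1$-segments are read as ordered factorizations (occurrences) rather than as unordered collections of subwords, so the term-by-term identification is complete.
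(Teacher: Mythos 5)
Your proof is correct and follows essentially the same route as the paper's: expand $A(e_0,B)=\sum_u A_u\,u(e_0,B)$, use $B_\emptyset=B_{e_0^n}=0$ to force every substituted block to contain an $e_1$, and reindex the surviving factorizations as $e_1$-segments of $w$ with $u$ recovered as the contraction. The only differences are presentational — you extract the coefficient of a fixed $w$ rather than collecting terms in the full series, and you add the finiteness remark and the explicit degenerate case $w=e_0^n$, which the paper leaves implicit.
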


\begin{proof} The depth 0 terms in $B$ being zero, we have :
\begin{align*} 
A&(e_0,B) =  A_\emptyset 1+ \sum_{\substack{ r\geq 1 \\ n_{r},\ldots,n_{0}\geq 1}} A_{e_0^{n_{r}-1}e_1 \dots e_1 e_0^{n_{0} - 1}}  e_0^{n_{r}-1} B \cdots B e_0^{n_{0}-1} \\
&=  A_\emptyset 1 + \sum_{\substack{ r\geq 1 \\ n_{r},\ldots,n_{0}\geq 1}}  A_{e_0^{n_{r}-1}e_1 \dots e_1 e_0^{n_{0} - 1}}  e_0^{n_{r}-1} \Big(\sum_{w_{r}} B_{w_{r}} w_{r}\Big) \cdots \Big(\sum_{w_{1}}  B_{w_{1}} w_{1}\Big) e_0^{n_{0}-1} \\
&=  A_\emptyset 1 + \sum_{\substack{ d\geq 1 \\ m_{d},\ldots,m_{0}\geq 1}} 
\sum_{\substack{r\geq 1 \\ n_{r},\ldots,n_{0}\geq 1
\\ w_{1},\ldots,w_{r} \text{ such that}
\\ e_0^{n_{r}-1}w_{r} \dots w_{1}e_0^{n_{1}-1} \\ = e_0^{m_{d}-1}e_1 \dots e_1 e_0^{n_{0} - 1}}}
B_{ e_0^{n_{r}-1}e_1 \dots e_1 e_0^{n_{0} - 1}}  \bigg( \prod_{i=1}^{r} 
B_{ w_{i}} \bigg) e_0^{m_{d}-1}e_1 \cdots e_1 e_0^{m_{0} - 1} \\
& = A_\emptyset 1 + \sum_{\substack{ d\geq 1 \\ m_{d},\ldots,m_{0}\geq 1}} 
\sum_{\substack{ r\geq 1 , (w_{i})_{i=1}^{r}\\ \text{$e_1$-segments of }\\ e_0^{m_{d} - 1}e_1 \ldots e_1 e_0^{m_{0} - 1} }}
 A_{(e_0^{m_{d}-1}e_1 \cdots e_1 e_0^{m_{0} - 1})/ (w_{i})_{i}} \bigg( \prod_{i=1}^{r} B_{w_{i}} \bigg) e_0^{m_{d}-1}e_1 \cdots e_1 e_0^{m_{0} - 1} 
\end{align*}
whence the result follows.
\end{proof}

We can reformulate the operation introduced in Definition \ref{op reg} by using the results of this section : 

\begin{notation}
For a power series $H=\sum\limits_{n=0}^{\infty} a_{n}z^n\in \mathbb{Q}_{p}[[z]]$, we denote by $H[z^{n}] = a_{n}$ for all $n \geq 0$.
\end{notation}

\begin{prop} \label{prop sec 2} For $P \in \mathbb{Q}_{p}\langle\langle e_{0},e_{1}\rangle\rangle$ and $L \in \mathbb{Q}_{p}[[z]]\langle \langle e_{0},e_{1}\rangle\rangle^\times$ and $L_{1} \in \mathbb{Q}_{p}[[z]]\langle \langle e_{0},e_{1}\rangle\rangle$, such that $L_{w}=0$ for all words $w$ of the form $w'e_{0}$. Let $n_{0}=1$. Then we have

\begin{multline*}
(P \newaction  (L,L_{1}))_{e_{0}^{n_{d}-1}e_{1}\cdots e_{0}^{n_{1}-1}e_{1}}[z^n] = \sum_{d'=0}^{d}
\sum_{\substack{0\leq i,i',j \\ i+i'+j= n}} \sum_{\substack{0 \leq a,b \\ a+b \leq n_{d'}-1}}
\bigg\{ L_{e_0^{n_{d}-1}e_1 \dots e_0^{n_{d'+1}-1}e_1 e_{0}^{a}}\Big(\frac{z}{z-1}\Big)[z^{i}])  \cdot \frac{L_{1}^{b}}{b!}(z)[z^{j}] \cdot
\\ 
\sum_{\substack{1\leq r \\ (w_{k})_{k=1}^{r}  \text{$e_1$-segments of }\\ (e_0^{n_{d'}-a-b-1} e_1 \dots e_0^{n_{1}- 1}e_1)^\rev}}
\bigg(\prod_{k=1}^{r} (P^{-1}e_1 P)_{w_{k}} \bigg)\cdot
\frac{(-1)^{\sum_{i=1}^{d'}n_{i} - a-b}}{p^{\sum_{i=1}^{d'}n_{i}-a-b-\sum_{k=1}^{r}(\mathrm{wt}(w_{k})-1)}}
L_{(e_0^{n_{d'}-a-b-1} e_1 \dots e_0^{n_{1}- 1}e_1)^{\mathrm{rev}} / (w_{k})_k}\Big(\frac{z^{p}}{z^p-1}\Big)[z^{i'}])
\bigg\}.
\end{multline*}
\end{prop}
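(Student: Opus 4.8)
The plan is to compute the coefficient at $w=e_0^{n_d-1}e_1\cdots e_0^{n_1-1}e_1$ of the three-fold product of Definition \ref{op reg} directly, combining the combinatorics of the concatenation product on $\mathbb{Q}_p\langle\langle e_0,e_1\rangle\rangle$ with Lemma \ref{lem contraction}. Since that product is concatenation, the coefficient of $w$ in any product $XYZ$ is $\sum_{w=uvt}X_uY_vZ_t$, the sum running over all factorizations of $w$ into a prefix $u$, a middle subword $v$ and a suffix $t$. After the substitutions $z\mapsto\frac{z}{z-1}$, $z\mapsto z$ and $z\mapsto\frac{z^p}{z^p-1}$ (each a power series in $z$ with vanishing constant term, so that composition is well defined), the three factors of $P\newaction(L,L_1)$ lie in $\mathbb{Q}_p[[z]]\langle\langle e_0,e_1\rangle\rangle$, so extracting $[z^n]$ turns the product into the convolution $\sum_{i+i'+j=n}$ of the statement. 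First I would set up this bookkeeping and reduce to analysing the three factors separately.

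Next I would treat the middle factor $\exp(L_1(z)e_0)$. As $L_1(z)$ is scalar (in the application $L_1=\Li_1^{(p)}$), $\exp(L_1(z)e_0)=\sum_{b\geq0}\frac{L_1(z)^b}{b!}e_0^b$, so the only admissible middle subwords are $v=e_0^b$, contributing $\frac{L_1^b}{b!}(z)[z^j]$. Enumerating the factorizations $w=u\,e_0^b\,t$ by the number of complete segments contained in $u$ and by how the run $e_0^{n_{d'}-1}$ is split between $u$, $v$ and $t$, one finds they are parametrized by $(d',a,b)$ with $0\leq d'\leq d$ and $a+b\leq n_{d'}-1$: the prefix is $u=e_0^{n_d-1}e_1\cdots e_0^{n_{d'+1}-1}e_1e_0^{a}$, the middle is $e_0^b$, and the suffix is $t=e_0^{n_{d'}-1-a-b}e_1\cdots e_0^{n_1-1}e_1$ (terms in which $u$ is a nonempty pure power of $e_0$ drop out since $L_{e_0^n}=0$). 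This yields the outer sums and the first factor $L_{e_0^{n_d-1}e_1\cdots e_0^{n_{d'+1}-1}e_1e_0^{a}}(\frac{z}{z-1})[z^i]$, while $n_0=1$ and the normalization $L_\emptyset=1$ take care of the boundary case $d'=0$, where $u=w$ and $v=t=\emptyset$.

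The third factor is the main computation. I would first use that $P$ is grouplike, so $P^{-1}e_1P$ is primitive and the substitution $e_0\mapsto\frac{e_0}{p}$, $e_1\mapsto\frac{P^{-1}e_1P}{p}$ sends primitives to primitives; hence $L(\frac{e_0}{p},\frac{P^{-1}e_1P}{p})$ is grouplike whenever $L$ is, and its inverse is given by the formula $\hat{S^{\vee}}$ recalled before Definition \ref{def: segment}. Writing $M$ for this grouplike series evaluated at $\frac{z^p}{z^p-1}$, one gets $Z_t=M^{-1}_t=(-1)^{\wt(t)}M_{t^{\rev}}$, which produces both the overall sign $(-1)^{\sum_{i=1}^{d'}n_i-a-b}=(-1)^{\wt(t)}$ and the reversal $(e_0^{n_{d'}-1-a-b}e_1\cdots e_0^{n_1-1}e_1)^{\rev}$. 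Then I would apply Lemma \ref{lem contraction} to $M_{t^{\rev}}$ with $A=L(\frac{e_0}{p},\frac{e_1}{p})$ and $B=P^{-1}e_1P$, legitimate because $B_{e_0^n}=0$ for all $n$ (every word of $P^{-1}e_1P$ has depth $\geq1$) and $A_{e_0^n}=p^{-n}L_{e_0^n}=0$. This gives the sum over $e_1$-segments $(w_k)_k$ of $t^{\rev}$ with the product $\prod_k(P^{-1}e_1P)_{w_k}$ and the contracted index $L_{t^{\rev}/(w_k)_k}(\frac{z^p}{z^p-1})[z^{i'}]$. The double scaling contributes $p^{-\wt(t^{\rev}/(w_k)_k)}$, and since $\wt(t^{\rev}/(w_k)_k)=\wt(t)-\sum_k(\wt(w_k)-1)=\sum_{i=1}^{d'}n_i-a-b-\sum_k(\wt(w_k)-1)$, this is exactly the denominator of the statement.

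Finally I would assemble the three contributions, match the convolution indices $(i,j,i')$, and confirm the sign and $p$-power conventions. The principal obstacle is precisely this third factor: one must justify the grouplike hypothesis so that the inverse genuinely equals the antipode $\hat{S^{\vee}}$, and then keep the signs coming from $\hat{S^{\vee}}$ and the powers of $p$ coming from the scaling $e_0\mapsto\frac{e_0}{p}$, $e_1\mapsto\frac{(\cdot)}{p}$ perfectly aligned with the reversal inside the $e_1$-segment sum; this is where a sign or a $p$-exponent is most easily misplaced. Once the factorization $w=uvt$ and the application of Lemma \ref{lem contraction} are in place, the remaining steps are formal.
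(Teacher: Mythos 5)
Your proposal is correct and takes essentially the same route as the paper's proof: the same three-fold factorization of the coefficient of the concatenation product with the convolution over $i+i'+j=n$, the same treatment of the middle factor $\exp(L_{1}(z)e_{0})$ giving the parametrization by $(d',a,b)$, and the same use of grouplikeness/primitivity together with the antipode formula $\hat{S^{\vee}}$ and Lemma \ref{lem contraction} (with $A=L(\frac{e_{0}}{p},\frac{e_{1}}{p})$, $B=P^{-1}e_{1}P$) to produce the sign, the reversal, and the $p$-power in the third factor. The only, immaterial, difference is the order of operations there: you apply the antipode to the full substituted series ($M^{-1}_{t}=(-1)^{\wt(t)}M_{t^{\rev}}$) and then contract, while the paper first passes the inverse through the substitution via $A(e_{0},B)^{-1}=A^{-1}(e_{0},B)$ and then contracts; the two computations agree term by term.
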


\begin{proof} Let $P,Q,R \in \mathbb{Q}_{p}\langle\langle e_0,e_1 \rangle\rangle$. For a word $w$ on $\{e_0,e_1\}$, we have
$\displaystyle (P\cdot Q \cdot R)_w = 
\sum_{\substack{w_{1},w_{2},w_{3}\\ \text{words on }e_0,e_1 \\ \text{s.t. } w_{1}w_{2}w_{3} = w} } P_{w_{1}} Q_{w_{2}} R_{w_{3}}$. We apply this to the formula of Definition \ref{op reg}, with $\displaystyle P= L(\frac{z}{z-1})$, $\displaystyle Q=\exp(L_{1}(z) e_{0})$, and $\displaystyle R= L\left(\frac{e_{0}}{p},\ \frac{P^{-1}e_{1}P}{p}\right)\Big(\frac{z^{p}}{z^{p}-1}\Big)$. Then, we consider the coefficient of $z^{n}$ in the power series expansion at $0$, for $n \in \mathbb{N}$. 

We obtain 

\begin{multline*} (P \newaction  (L,L_{1}))_{e_{0}^{n_{d}-1}e_{1}\cdots e_{0}^{n_{1}-1}e_{1}}[z^{n}] =
\\ \sum_{d'=1}^{d} \sum_{\substack{i,i',j\geq 0 \\ i+i'+j=n}} L\Big(\frac{z}{z-1}\Big)_{e_{0}^{n_{d}-1}e_{1}\cdots e_0^{n_{d'}-1}e_{1}e_{0}^{a}}[z^{i}] \cdot e^{L_{1}(z)e_{0}}_{e_{0}^{b}}[z^{j}] \cdot L\Big(\frac{e_{0}}{p},\frac{P^{-1}e_{1}P}{p}\Big)\Big(\frac{z^{p}}{z^{p}-1}\Big)^{-1}_{e_{0}^{n_{d'-1}-1-a-b} e_{1} \cdots e_{0}^{n_{1}-1}e_{1}}[z^{i'}] .
\end{multline*}

If $A \in \mathbb{Q}_{p}\langle \langle e_{0},e_{1}\rangle\rangle$ is grouplike and $B \in \mathbb{Q}_{p}\langle \langle e_{0},e_{1}\rangle\rangle$ is primitive, then $A(e_{0},B)$ is grouplike and $A(e_0,B)^{-1} = A^{-1}(e_0,B)$. Then we apply Lemma \ref{lem contraction}, 
with $A=L(\frac{e_{0}}{p},\frac{e_{1}}{p})$ and $B = P^{-1}e_{1}P$. Indeed, we have then $A_{e_{0}^{n}}= B_{e_{0}^{n}}=0$ for all $n\geq 1$, and, since $P$ is group like, $P^{-1}e_{1}P$ is primitive and we have $S^{\vee}(P) = -P$. We deduce the result.
\end{proof}


\section{Binomial multiple harmonic sums and the power series expansion of $p$-adic multiple polylogarithms}\label{sec:3} 

In this section we discuss the coefficients of the power series expansion in $z$ appearing in Proposition \ref{prop sec 1}.

\begin{lem} \label{shuffle e0a}

Let $P \in {\mathbb Q}_{p} \langle \langle e_0,e_1 \rangle\rangle$ which 

is group-like and such that $P_{e_0}=0$. Then we have 

$$ 
P_{ e_0^{n_{r}-1} e_1 \cdots e_0^{n_{1} - 1}e_1 e_0^{n_{0}- 1}}  = \sum_{\substack{k_{1},\ldots,k_{r} \geq 0\\ k_{1}+\cdots+k_{r} = n_{0}-1}} \prod_{i=1}^{r} {-n_{i} \choose k_{i}} P_{ e_0^{n_{r}+k_{r}-1} e_1 \cdots e_0^{n_{1}+k_{1} - 1}e_1 }
$$

Here ${-n_{i} \choose k_{i}} = (-1)^{k_{i}} {k_{i} + n_{i} - 1 \choose k_{i}}$ is the coefficient of $X^{k_{i}}$ in the power series $(1+X)^{-n_{i}}$. 
\end{lem}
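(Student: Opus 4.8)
The plan is to use that a group-like series $P$ is a character of the shuffle Hopf algebra $\mathcal{O}^{\sh}$, i.e. $P_{u \sh v} = P_u P_v$ for all words $u,v$. Combined with the hypothesis $P_{e_0}=0$ this gives $P_{u \sh e_0}=P_u\cdot P_{e_0}=0$ for every word $u$, and I would extract the lemma entirely from these relations. To organize them, for $m\geq 0$ and $k_1,\dots,k_r\geq 0$ set
\[
v(m;k_1,\dots,k_r):=e_0^{n_r-1+k_r}e_1\cdots e_0^{n_1-1+k_1}e_1\,e_0^{m},
\]
so that the left-hand side of the lemma is $P_{v(n_0-1;0,\dots,0)}$ and the words appearing on the right-hand side are exactly the $P_{v(0;k_1,\dots,k_r)}$ with $\sum_i k_i = n_0-1$.

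First I would expand $u\sh e_0$ for $u=v(m-1;k_1,\dots,k_r)$. Since shuffling by a single $e_0$ inserts it into each gap of $u$ with multiplicity one, and the letters $e_1$ separate the maximal $e_0$-blocks, every inserted $e_0$ enlarges exactly one block; a block of size $b$ accounts for $b+1$ gaps. Collecting blocks gives
\[
u\sh e_0 = m\,v(m;k_1,\dots,k_r) + \sum_{i=1}^{r}(n_i+k_i)\,v(m-1;k_1,\dots,k_i+1,\dots,k_r).
\]
Applying $P$ and using $P_{u\sh e_0}=0$ then yields, for every $m\geq 1$, the recursion
\[
P_{v(m;k_1,\dots,k_r)} = -\frac{1}{m}\sum_{i=1}^{r}(n_i+k_i)\,P_{v(m-1;k_1,\dots,k_i+1,\dots,k_r)}.
\]

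I would finish in either of two ways. Iterating this recursion from $v(n_0-1;0,\dots,0)$ down to trailing exponent $0$ expresses $P_{v(n_0-1;0,\dots,0)}$ as a sum over sequences of $n_0-1$ moves, each move transferring one $e_0$ from the trailing block into some block $i$; a sequence realizing the distribution $(k_1,\dots,k_r)$ contributes $(-1)^{n_0-1}/(n_0-1)!$ from the denominators (which run through $\tfrac1{n_0-1},\dots,\tfrac11$ regardless of the chosen blocks) times $\prod_{i}n_i(n_i+1)\cdots(n_i+k_i-1)$ from the numerators (the $j$-th increment of block $i$ always producing the factor $n_i+j-1$). Summing over the $\binom{n_0-1}{k_1,\dots,k_r}$ orderings of a fixed distribution collapses the factorials into $\prod_i(-1)^{k_i}\binom{n_i+k_i-1}{k_i}=\prod_i\binom{-n_i}{k_i}$, which is the asserted formula. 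Alternatively I could induct on $n_0-1$: feeding the recursion (with all $k_i=0$) into the induction hypothesis for the shifted index $(n_1,\dots,n_i+1,\dots,n_r)$, the coefficient of each fixed word reassembles because $\frac{\binom{N}{K-1}}{\binom{N}{K}}=\frac{K}{N-K+1}$ reduces the verification to $\sum_i k_i = n_0-1$.

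The main obstacle is the combinatorial core rather than any analytic input: correctly reading off the block-insertion multiplicities in $u\sh e_0$, and then checking that for a fixed distribution $(k_1,\dots,k_r)$ the weight produced by the iterated recursion is independent of the order of the moves, so that the multinomial coefficient, and hence the binomials $\binom{-n_i}{k_i}$, emerge cleanly. The inductive route sidesteps this order-independence bookkeeping, and is the one I would ultimately write up.
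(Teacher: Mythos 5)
Your proposal is correct and follows exactly the route the paper indicates: the paper's proof is the one-line remark that the identity follows ``by induction on $n_{0}$, using the fact that $P_{e_0^{n_{d}-1}e_1\cdots e_0^{n_{1}-1}e_1 e_0^{n_{0}-1}\,\sh\, e_0} = P_{e_0^{n_{d}-1}e_1\cdots e_0^{n_{1}-1}e_1 e_0^{n_{0}-1}}P_{e_0}=0$,'' which is precisely your recursion $P_{v(m;k)}=-\tfrac1m\sum_i(n_i+k_i)P_{v(m-1;k+\delta_i)}$ together with the induction on $n_0$. Your write-up simply supplies the block-insertion expansion of $u\sh e_0$ and the binomial bookkeeping that the paper leaves to the reader, and both of your two ways of closing the argument are valid.
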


\begin{proof}
This is a well-known fact, proven easily by induction on $n_{0}$, using the fact that $P_{e_0^{n_{d}-1} e_1 \cdots e_0^{n_{1} - 1}e_1 e_0^{n_{0}- 1}\text{ } \sh\text{ } e_0 } 
= P_{e_0^{n_{d}-1} e_1 \cdots e_0^{n_{1} - 1}e_1 e_0^{n_{0}- 1}}P_{e_0} = 0$.
\end{proof}

By the power series expansion of $p$-adic multiple polylogarithms (\ref{eq:polylog}), we have, for all $d \geq 1$ and $n_{1},\ldots,n_{d} \geq 1$, for all $m\geq 1$,

\begin{eqnarray} \label{eq: polylog MHS 1} \Li^\alpha_{n_1,\ldots,n_d}(z) = \sum_{0< m_{r}} \frac{h_{n_{1},\ldots,n_{d-1}}(m_{d})}{m_{d}^{n_{d}}} z^{m_{d}} .
\\ 
 \label{eq: polylog MHS 2}
    h_{n_1,\dots,n_d}(m)= \left\{\frac{z}{1-z}\Li_{n_1\dots n_d}(z)\right\}[z^m]= \sum_{k=1}^{m-1} \Li_{n_1\dots n_d}(z) [z^k].
\end{eqnarray}

What we need is an analogue for the power series appearing in proposition (\ref{prop sec 1}) :

\begin{lem} \label{lemma z/z-1} 

\begin{itemize} 
\item[(i)] For any word $w$ on $e_{0},e_{1}$, we have :

\begin{equation} \label{eq: relation between BMHS and MPL}
   \Li_{w}\Big(\frac{z}{z-1}\Big)[z^m]=
h^{B}_{w}(m) 
\end{equation}

\item[(ii)] For all $d\geq 1$ and $n_{1},\ldots,n_{d} \geq 1$, for all $m\geq 1$, we have
\begin{equation}
\label{eq: relation between pBMHS and MPL}
  \Li_{n_1,\dots,n_d}\left(\frac{z^{p}}{z^{p}-1}\right)[z^m]=
\left\{ \begin{array}{ll} h^{B}_{n_{1},\ldots,n_{d}}(\frac{m}{p}) & \text{if } p\mid m,
\\ 0 & \text{otherwise.}
\end{array}\right.
\end{equation}

\item[(iii)] For $|z|_{p}<1$, we have 

\begin{equation} \Li_{1}(z) - \frac{1}{p}\Li_{1}(z^{p}) = \sum_{\substack{n>0 \\ p \nmid n}} \frac{z^{n}}{n}
\end{equation}
\end{itemize}
\end{lem}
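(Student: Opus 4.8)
The plan is to prove the three parts in turn, with (i) carrying essentially all the content and (ii), (iii) following quickly. Part (iii) I would dispatch first and independently: since $\Li_1(z)=-\log_p^\alpha(1-z)=\sum_{n\geq1}z^n/n$ as a power series, we get $\frac1p\Li_1(z^p)=\sum_{n\geq1}\frac{z^{pn}}{pn}=\sum_{m\geq1,\,p\mid m}\frac{z^m}{m}$, and subtracting term by term leaves $\sum_{p\nmid n}z^n/n$. The relation $\log_p^\alpha(z^p)=p\log_p^\alpha(z)$ ensures independence of the branch $\alpha$, as already remarked just before the statement.

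For part (i) the heart is the case of an index word $w=e_0^{n_d-1}e_1\cdots e_0^{n_1-1}e_1$. Here I would substitute the multiple-harmonic-sum expansion of equation \eqref{eq: polylog MHS 1}, namely $\Li_{n_1,\ldots,n_d}(u)=\sum_{k\geq1}\frac{h_{n_1,\ldots,n_{d-1}}(k)}{k^{n_d}}u^k$, at $u=\frac{z}{z-1}$, and expand each power by the negative binomial theorem: $(\tfrac{z}{z-1})^k=(-1)^kz^k(1-z)^{-k}=(-1)^k\sum_{i\geq0}\binom{k+i-1}{i}z^{k+i}$. Extracting the coefficient of $z^m$ forces $i=m-k$, giving $(-1)^k\binom{m-1}{m-k}$ for each $1\leq k\leq m$, so that $\Li_w(\tfrac{z}{z-1})[z^m]=\sum_{k=1}^m\frac{h_{n_1,\ldots,n_{d-1}}(k)}{k^{n_d}}(-1)^k\binom{m-1}{m-k}$, which I then reorganize, using $(-1)^k\binom{m-1}{m-k}=(-1)^m\binom{-k}{m-k}$, into the defining expression of $h^B_w(m)$ from Definition \ref{def bmhs}.

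To extend (i) to all words I would argue that both sides satisfy the same binomial recursion in a trailing block of $e_0$'s. On the left, $P(z):=G_0^{\alpha,\reg}(\tfrac{z}{z-1})$ is group-like with $P_{e_0}=0$, these two properties being preserved under the substitution $z\mapsto\frac{z}{z-1}$, so Lemma \ref{shuffle e0a} writes $\Li_w(\frac{z}{z-1})$ for $w$ ending in $e_0$ as $\sum_{k_1+\cdots+k_d=n_0-1}\prod_i\binom{-n_i}{k_i}\Li(\cdots)(\frac{z}{z-1})$ over words ending in $e_1$ (the common sign $(-1)^{\dep}$ relating $\Li$ to the coefficients of $G_0^{\alpha,\reg}$ cancels, since depth is preserved). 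On the right this is exactly the recursion defining $h^B_w$ for such words. The depth-$0$ base case is checked directly: $\Li_{e_0^n}$ regularizes to $0$ for $n\geq1$ and to the constant $1$ for $n=0$, matching the stated values of $h^B_{e_0^n}(i)$. Part (ii) is then immediate from (i) by replacing $z$ with $z^p$, giving $\Li_{n_1,\ldots,n_d}(\frac{z^p}{z^p-1})=\sum_{m\geq1}h^B_{n_1,\ldots,n_d}(m)\,z^{pm}$, whose $z^{m'}$-coefficient is $h^B(m'/p)$ when $p\mid m'$ and $0$ otherwise.

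The step I expect to be the main obstacle is the sign and binomial bookkeeping in (i): correctly passing between the $(-1)^k\binom{m-1}{m-k}$ produced by the negative-binomial expansion and the exact form recorded in Definition \ref{def bmhs} (moving the alternating sign through $\binom{-k}{m-k}=(-1)^{m-k}\binom{m-1}{m-k}$), and verifying that the regularization of $\Li_w$ for words ending in $e_0$ is genuinely compatible with the binomial extension of $h^B_w$, so that Lemma \ref{shuffle e0a} and Definition \ref{def bmhs} produce identical combinations. Once the index-word case is pinned down with the right signs, the remaining reductions are purely formal.
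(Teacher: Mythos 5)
Your route for each part is the same as the paper's: for (i) the paper likewise substitutes the expansion \eqref{eq: polylog MHS 1} at $\frac{z}{z-1}$ and extracts the $z^m$-coefficient by the negative binomial theorem, (ii) is the same substitution with $z^p$ in place of $z$, and (iii) is the same term-by-term computation. Your handling of words ending in $e_0$ (Lemma \ref{shuffle e0a} applied to the group-like series $G_{0}^{\alpha,\reg}\big(\frac{z}{z-1}\big)$, matched against the binomial extension in Definition \ref{def bmhs}, with the depth-$0$ base case checked directly) is actually more explicit than the paper's one-line proof, which leaves that case implicit even though the statement is claimed for any word $w$; that portion of your argument is correct and worth keeping.

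The step that does not close is precisely the sign bookkeeping you flagged as the main obstacle. Writing $w=e_0^{n_d-1}e_1\cdots e_0^{n_1-1}e_1$ and $a_k = h_{n_1,\ldots,n_{d-1}}(k)/k^{n_d}$, your computation correctly gives
\begin{equation*}
\Li_{w}\Big(\frac{z}{z-1}\Big)[z^m] \;=\; \sum_{k=1}^{m}(-1)^{k}\, a_k \binom{m-1}{m-k} \;=\; (-1)^{m}\sum_{k=1}^{m} a_k \binom{-k}{m-k},
\end{equation*}
whereas Definition \ref{def bmhs} as printed reads $(-1)^{m}\sum_{k} a_k\binom{m-1}{m-k}$; the two differ termwise by $(-1)^{m-k}$ and are not equal. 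For instance, in depth one with $n_1=1$ the left-hand side is $\log(1-z)[z^m]=-\frac{1}{m}$, while the printed definition gives $(-1)^m\frac{2^m-1}{m}$. So your final move, ``reorganize into the defining expression,'' silently replaces $\binom{m-1}{m-k}$ by $\binom{-k}{m-k}$; with the definition read literally, that step fails (and the lemma as stated is then false). The paper carries the matching slip on its side: its proof of (i) cites the identity $\sum_{n\geq1}a_n\big(\frac{z}{z-1}\big)^n = \sum_{n} z^n(-1)^n\sum_{1\leq k\leq n}a_k\binom{n-1}{n-k}$, which is false as stated (test $a_k=\delta_{k,1}$ at $n=2$); the correct version has $(-1)^k$ inside the sum, equivalently $\binom{-k}{n-k}$ in place of $\binom{n-1}{n-k}$. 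In short, your derivation is the mathematically correct one and proves the lemma for the evidently intended definition $h^{B}_{w}(m)=(-1)^m\sum_k a_k\binom{-k}{m-k}$, but you should state this discrepancy with Definition \ref{def bmhs} explicitly rather than assert the match; as written, that assertion is the one step of your proof that does not hold.
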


\begin{proof} (i) This follows from equation \eqref{eq: polylog MHS 1}, Definition \ref{def bmhs} and the fact that for any $(a_{n})_{n\geq 1} \in \mathbb{Q}_{p}^{\mathbb{N}}$, we have $\displaystyle \sum_{n=1}^{+\infty} a_{n} \Big(\frac{z}{z-1}\Big)^{n} = \sum_{n=0}^{+\infty} z^{n} (-1)^{n} \sum_{1\leq k \leq n } a_{k}  {n-1 \choose n-k}$.  

(ii) Similar to (i) using instead $\displaystyle \sum_{n=1}^{+\infty} a_{n} \Big(\frac{z^{p}}{z^{p}-1}\Big)^{n} = \sum_{n=0}^{+\infty} z^{pn} (-1)^{n} \sum_{1\leq k \leq n } a_{k}  {n-1 \choose n-k}$.

(iii) For $|z|_{p}<1$, we have $\displaystyle \Li_{1}(z) = \sum_{n>0} \frac{z^{n}}{n}$, and $\displaystyle \frac{1}{p}\Li_{1}(z^{p}) = \sum_{n>0} \frac{z^{pn}}{pn}$.
\end{proof}

\begin{rem} Since $z \mapsto \frac{z}{z-1}$ is an involution, the formula of Definition \ref{def bmhs} admits the following reciprocal formula : for all $d \in \mathbb{N}_{\geq 1}$, $n_{1},\ldots,n_{d} \in \mathbb{N}_{\geq 1}$ and $m \in \mathbb{N}_{\geq 1}$ : 
$\displaystyle \frac{h_{n_{1},\ldots,n_{d-1}}(m)}{m^{n_{d}}} = (-1)^{m} \sum_{1\leq k\leq m} h^{B}_{n_{1},\ldots,n_{d}}(k) {n-1 \choose n-k}$.
In particular, $\displaystyle m^{n_{d}}(-1)^{m} \sum\limits_{1\leq k\leq m} h^{B}_{n_{1},\ldots,n_{d}}(k) {m-1 \choose m-k}$ does not depend on $n_{d}$.
\end{rem}

\begin{rem} The double shuffle relations and, more generally, properties of multiple polylogarithms (including multiple polylogarithms of several variables) imply equations satisfied by binomial multiple harmonic sums and variants of them, by applying $z \mapsto \frac{z}{z-1}$ and considering coefficients of the power series expansion.
\end{rem}

\section{End of the proof and comments}

We finish the proof of Theorem \ref{thm 0.2}, and of the formulas of Example \ref{ex depth1} and Example \ref{ex depth2}. The proof combines the main results of Sections \ref{sec:1},  \ref{sec:2} and \ref{sec:3}, and the following theorem due to Mahler (see also \cite[Appendix A]{J1} for comments on this theorem) :

\begin{thm}[\cite{M}] \label{prop Mahler}
Let $f$ be an analytic function on $(\mathbb{P}^{1,an} \setminus D(1,1))$, 
with power series expansion $f(z) = \sum\limits_{n\geq 0} a_{n} z^{n}$ with $a_n\in\mathbb Q_p$ at $0$. Then $n \in \mathbb{N}^{\ast} \mapsto a_{n}$ extends to a continuous function $\mathbb{Z}_{p} \rightarrow \mathbb{Q}_{p}$ and 
$$ f(\infty) = - \underset{|n|_{p}\rightarrow 0}{\lim} a_{n} $$
\end{thm}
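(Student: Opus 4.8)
The plan is to transport the problem to a closed disc centred at the origin, where $p$-adic analyticity becomes a statement about coefficients tending to $0$. The coordinate $s=\tfrac{1}{z-1}$ identifies $\mathbb{P}^{1,an}\setminus D(1,1)=\{|z-1|_p\geq 1\}\cup\{\infty\}$ with the closed unit disc $\{|s|_p\leq 1\}$, carrying $z=\infty$ to $s=0$ and $z=0$ to $s=-1$. Since rigid analytic functions on a closed disc form a Tate algebra, I would first record that $f$ is given by a convergent expansion $f(z)=\sum_{k\geq 0}b_k\,(z-1)^{-k}$ with $b_k\in\mathbb{Q}_p$ and $|b_k|_p\to 0$ (the overconvergence mentioned in the statement even gives convergence on a slightly larger disc, which is more than enough). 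In this presentation the value at infinity is simply the constant term, $f(\infty)=b_0$.

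Next I would re-expand $f$ at $z=0$. From $(z-1)^{-k}=(-1)^k\sum_{n\geq 0}\binom{n+k-1}{k-1}z^n$ for $k\geq 1$ one reads off
\[
a_n=b_0\,\delta_{n,0}+\sum_{k\geq 1}(-1)^k b_k\binom{n+k-1}{k-1},
\]
so that $a_n=\sum_{k\geq 1}(-1)^k b_k\binom{n+k-1}{k-1}$ for $n\geq 1$. Here Mahler's interpolation enters: each $n\mapsto\binom{n+k-1}{k-1}$ is an integer-valued polynomial and hence extends to a continuous map $\mathbb{Z}_p\to\mathbb{Z}_p$ of sup-norm at most $1$. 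Because $|b_k|_p\to 0$, the series converges uniformly in $n\in\mathbb{Z}_p$, being a uniform limit of its polynomial partial sums, and so defines a continuous function $\widetilde a\colon\mathbb{Z}_p\to\mathbb{Q}_p$ extending $n\mapsto a_n$ on $\mathbb{N}_{\geq 1}$; this is the first assertion. Evaluating at $n=0$, where $\binom{k-1}{k-1}=1$, gives $\lim_{|n|_p\to 0}a_n=\widetilde a(0)=\sum_{k\geq 1}(-1)^k b_k$.

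It remains to match this against $f(\infty)=b_0$, and this comparison is the one delicate point. Adding the two quantities yields $\widetilde a(0)+f(\infty)=\sum_{k\geq 0}(-1)^k b_k$, which is the value of $f$ at $s=-1$, i.e.\ $f(0)=a_0$; hence in full generality
\[
f(\infty)=a_0-\lim_{|n|_p\to 0}a_n.
\]
The stated identity $f(\infty)=-\lim_{|n|_p\to 0}a_n$ is exactly this in the case $a_0=f(0)=0$, which holds for every function to which we apply the theorem, since the coefficient of a nonempty word in $(P\newaction(L,L_1))$ vanishes at $z=0$. I expect this constant-term bookkeeping to be the main obstacle: the tail $(a_n)_{n\geq 1}$ always interpolates continuously and never sees the anomalous coefficient $b_0=f(\infty)$, so the content of the theorem is precisely that $b_0$ is recovered as $a_0$ minus the limiting value of the interpolation.
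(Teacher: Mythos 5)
Your proof is correct. Since the paper offers no proof of this statement --- it is quoted from \cite{M}, with commentary deferred to \cite[Appendix A]{J1} --- your argument is necessarily independent of the text, but it is the natural one: identify $\mathbb{P}^{1,an}\setminus D(1,1)$ with the closed unit disc via $s=\tfrac{1}{z-1}$, expand $f=\sum_{k\geq 0}b_k s^k$ in the Tate algebra (so $|b_k|_p\to 0$ and $f(\infty)=b_0$), re-expand at $z=0$ to get $a_n=\sum_{k\geq 1}(-1)^k b_k\binom{n+k-1}{k-1}$ for $n\geq 1$, and interpolate the integer-valued polynomials $n\mapsto\binom{n+k-1}{k-1}$, the decay of the $b_k$ giving uniform convergence and hence continuity of the extension. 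Each step (including the interchange of the two summations, justified by unconditional convergence) is sound.

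The most valuable part of your write-up is the ``delicate point'' you isolate, and you are right about it: as literally stated, the identity $f(\infty)=-\lim_{|n|_p\to 0}a_n$ is false in general --- take $f\equiv 1$ (left side $1$, right side $0$), or $f(z)=\tfrac{1}{z-c}$ with $c\in D(1,1)$. The correct general statement, which your computation yields, is $f(\infty)=f(0)-\lim_{|n|_p\to 0}a_n$; the constant term $a_0=b_0+\tilde a(0)$ is simply not seen by the interpolation of the tail $(a_n)_{n\geq 1}$. The theorem remains valid for every function to which the paper applies it: in the proof of Theorem \ref{thm 0.2} it is applied to $f_w=\big(\Phi_{p}^{\De}\newaction(G_{0}^{\alpha,\reg},\Li_{1}^{(p)})\big)_w$ for nonempty words $w$, and $f_w(0)=0$ because $G_{0}^{\alpha,\reg}$ evaluates to the identity series at the origin and $\Li_{1}^{(p)}(0)=0$, so all three factors in Definition \ref{op reg} reduce to $1$ at $z=0$. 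Your verification of this vanishing is exactly the hypothesis missing from the statement; it would be worth recording it explicitly (``assume moreover $f(0)=0$'', or stating the formula as $f(\infty)=f(0)-\lim_{|n|_p\to 0}a_n$), since the statement as printed admits the trivial counterexamples above.
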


We also need the definition of adjoint $p$-adic multiple zeta values from \cite{J4} : for all words $w'$ on $\{e_{0},e_{1}\}$ :

\begin{equation}  \label{eq:adjoint pMZVs 0} (\zeta_{p}^{\De})^{\Ad}(w') = (-1)^{\mathrm{dp}(w') - 1} ((\Phi_{p}^{\De})^{-1}e_{1}\Phi_{p}^{\De})_{w'} 
\end{equation}

More explicitly, for a word $w'=e_{0}^{b}e_{1}e_{0}^{n_{d}-1} e_{1} \ldots e_0^{n_1 - 1} e_{1} e_{0}^{a}=(a;n_{1},\ldots,n_{d};b)$, with $d\geq 1$, $n_{1},\ldots,n_{d} \in \mathbb{N}_{\geq 1}$ and $a,b\geq 0$, we have :

\begin{multline}
\label{eq:adjoint pMZVs}
(\zeta_{p}^{\De})^{\Ad}(w') =
\\ \sum\limits_{d'=1}^{d-1} 
\sum\limits_{\substack{0 \leq k_{1},\ldots,k_{d} \\ k_{1}+\cdots+k_{d'}=a \\ k_{d'+1}+\cdots+k_{d}=b}}
(-1)^{n_{d'+1}+\cdots+n_{d}+b}\bigg( \prod\limits_{i=1}^{d} {-n_{i} \choose k_{i}} \bigg) \zeta_{p}^{\De}(n_{1}+k_{1},\ldots,n_{d'}+k_{d'})  \zeta_{p}^{\De}(n_{d}+k_{d},\ldots,n_{d'+1}+k_{d'+1}) 
\\ + \delta_{0}(a)\cdot (-1)^{b+n_{1}+\cdots+n_{d}} 
\cdot\bigg( \prod\limits_{i=1}^{d} {-n_{i} \choose k_{i}} \bigg)\cdot
\sum_{\substack{0\leq k_{1},\ldots,k_{d} \\ k_{1}+\cdots+k_{d}=b}} \zeta^{\De}_{p}(n_{d}+k_{d},\ldots,n_{1}+k_{1})\\
+ \delta_{0}(b)\cdot
\bigg( \prod\limits_{i=1}^{d} {-n_{i} \choose k_{i}} \bigg)\cdot
\sum_{\substack{0\leq k_{1},\ldots,k_{d} \\ k_{1}+\cdots+k_{d}=a}}\zeta^{\De}_{p}(n_{1}+k_{1},\ldots,n_{d}+k_{d})
\end{multline}
and $(\zeta_{p}^{\De})^{\Ad}(e_{0}^{n})=0$ for all $n \in \mathbb{N}_{\geq 1}$, and $(\zeta_{p}^{\De})^{\Ad}(e_{0}^{a}e_{1}e_{0}^{b})=\delta_{0}(a)\delta_{0}(b)$ for all $a,b \in \mathbb{N}_{\geq 0}$. Here $\delta_0(n)=1$ when $n=0$ and 0 when $n\neq 0$.

We can now finish the proof of Theorem \ref{thm 0.2}.

\bigskip
{\it Proof of Theorem \ref{thm 0.2}. }
Combining Proposition \ref{prop sec 2}, Lemma \ref{lemma z/z-1} and equation \eqref{eq:adjoint pMZVs 0}, and the fact that for $(w_{i})_{i=1,\ldots,r}$ an $e_{1}$-segment of a word $w$ we have $\sum_{i=1}^{r} \mathrm{dp}(w_{i})-1 = \mathrm{dp}(w) - r$, we deduce : for all words $e_{0}^{n_{d}-1}e_{1}\cdots e_{0}^{n_{1}-1}e_{1}$ and all $m\geq1$, we have :

\begin{multline} \label{eq:3.5}
(\Phi_{p}^{\De}  \text{ } \newaction \text{ }  (G_{0}^{\alpha, \reg},\Li_{1}^{(p)}))_{e_{0}^{n_{d}-1}e_{1}\cdots e_{0}^{n_{1}-1}e_{1}}[z^m] = 
\\ \sum_{d'=0}^{d}\sum_{\substack{a,b\geq 0 \\ a+b \leq n_{d'}-1}}
\sum_{\substack{0\leq i,j,l \\ i+l+pj = m}} \bigg( h^{B}_{e_0^{n_{d}-1}e_1 \dots e_0^{n_{d'+1}-1}e_1e_{0}^{a}}(i)  
 \cdot
\sum_{\substack{i_1,\ldots,i_b \geq 0 \\ i_1+\cdots+i_b=l \\ (i_1,p)=\cdots=(i_b,p)=1}}\frac{1}{b!\cdot i_1\cdots i_b} \cdot
\\
\
\sum_{\substack{1\leq r \\ (w_{k})_{k=1}^{r}  \text{$e_1$-segments of }\\ (e_0^{n_{d'}-1-a-b} e_1 \dots e_0^{n_{1}- 1}e_1)^\rev}} 
\frac{(-1)^{\sum_{i=1}^{d'} n_{i} - a-b}}{p^{\sum_{i=1}^{d'}n_{i}-a-b-\sum_{k=1}^r(\wt(w_k)-1)}}
\bigg((-1)^{d'-r}\prod_{k=1}^{r} (\zeta_{p}^{\De})^{\Ad}(w_{k}) \bigg)\cdot h^{B}_{(e_0^{n_{d'}-1-a-b} e_1 \dots e_0^{n_{1}- 1}e_1)^{rev} / (w_{k})_k} (j) \bigg) .
\end{multline}
Here we put 
\[\sum\limits_{\substack{i_1,\ldots,i_b \geq 0 \\ i_1+\cdots+i_b=l \\ (i_1,p)=\cdots=(i_b,p)=1}}\frac{1}{b!\cdot i_1\cdots i_b}=
\begin{cases}
    1 & \text{when } b=0,\  l=0, \\
    0 & \text{when }b=0,\ l> 0.
\end{cases}
\]

Applying Proposition \ref{prop sec 1}, Theorem \ref{prop Mahler} to equation \eqref{eq:3.5}, and the fact that $\zeta_{p}^{\De}(w) = (-1)^{\mathrm{dp}(w)} (\Phi_{p}^{\De})_{w}$ for all words $w$, we deduce Theorem \ref{thm 0.2}.
\qed
\newline 
\newline 
The formula of Theorem \ref{thm 0.2} is inductive with respect to the depth $d$ of indices $(n_{1},\ldots,n_{d})$ of $p$-adic multiple zeta values. Indeed, in equation (\ref{eq:3.5}), each $w_{k}$ is a subword of $w=(n_{1},\ldots,n_{d})$ thus 
$\dep w_{k} \leq \dep w$. By equation \eqref{eq:adjoint pMZVs}, each $(\zeta^{\De}_{p})^{\Ad}(w_{k})$ is thus a polynomial of $p$-adic multiple zeta values of depth $\leq \dep(w) -1$ (indeed, note that in equation (\eqref{eq:adjoint pMZVs}), $w'$ is of depth $d+1$ and the indices of $p$-adic multiple zeta values in the right-hand side are of depth $\leq d$.)

Let us also recall that, for all $n\geq 1$, $(\zeta_{p}^{\De})^{\Ad}(0;n;0)  =  (1+(-1)^{n}) \zeta_{p}^{\De}(n_{1})=0$, since it is known that $\zeta_{p}^{\De}(n)=0$ for $n$ even. 

\bigskip
Now we explain how we recover the formulas of Example \ref{ex depth1} and Example \ref{ex depth2} as examples of the general formula of Theorem \ref{thm 0.2}. 

\bigskip
{\bf Proof  of Example \ref{ex depth1}} :

For $d=1$, the sum over $d'$ in Theorem \ref{thm 0.2} has two terms, multiplied by $(-1)^{d}=1$ : 

$\bullet$ $d'=0$ : this term is $h^{B}_{e_{0}^{n-1}e_1}(m)$.

$\bullet$ $d'=1$ : since $h^{B}_{e_{0}^{a}}(i) = 0$ for all $a\geq 0$ and $i\geq 0$ except when $a=i=0$ in which case it is 1, the sums over $a$ and $i$ can be restricted to $a=i=0$, with the factor $h^{B}_{e_{0}^{a}}(i)=h^B_1(0)=1$. The $e_{1}$-segments of $(e_{0}^{n-1-b}e_{1})^{\rev} = e_{1}e_{0}^{n-1-b}$ are $(e_{1}e_{0}^{n-1-b-k})$, $0\leq k \leq n-1-b$, and the contraction of $(e_{0}^{n-1-b}e_{1})^\rev$ by $(e_{1}e_{0}^{n-1-b-k})$ is $e_{1}e_{0}^{k}$. Finally, since $(\zeta_{p}^{\De})^{\Ad}(e_{1}e_{0}^{n-1-b-k}) = ((\Phi_{p}^{\De})^{-1}e_{1}\Phi_{p}^{\De})_{e_{1}e_{0}^{n-1-b-k}} = \Phi_{p}^{De}[e_{0}^{n-1-b-k}] = \left\{ \begin{array}{ll} 1 & \text{ if } k=n-1-b \\ 0 & \text{if } k<n-1-b \end{array} \right.$. Thus, only the $k=n-1-b$ term remains, with a factor $(\zeta_{p}^{\De})^{\Ad}(e_{1})=1$. Thus we obtain the term

$$ \sum_{0\leq b \leq n-1} \sum_{\substack{l,j\geq 0 \\ l+pj=m}} (\frac{-1}{p})^{n-b} \sum_{\substack{i_{1}+\cdots+i_{b}=l \\ \forall j,  p\nmid i_{j} }} \frac{1}{b!\cdot i_{1}\cdots i_{b}}  h^{B}_{e_{1}e_{0}^{n-1-b}}(j) . $$
The result is the sum of the terms $d'=0$ and $d'=1$, multiplied by $(-1)^{d}=-1$.
\qed

\bigskip
{\bf Proof of Example \ref{ex depth2}}:

For $d=2$, the sum over $d'$ in Theorem \ref{thm 0.2} has three terms : 

$\bullet$ $d'=0$ : this term is $h^{B}_{e_{0}^{n_{2}-1}e_{1}e_{0}^{n_{1}-1}e_1}(m)$.

$\bullet$ $d'=1$ : this is similar to the $d'=1$ case of depth $1$, except that this time we have to consider the factors $h^{B}_{e_{0}^{n_{2}-1}e_{1}e_{0}^{a}}(i)$ which do not vanish, thus we obtain :
$$ \sum_{\substack{0\leq a,b \\a+b \leq n_{1}-1}} \sum_{\substack{i,l,j\geq 0 \\ i+l+pj=m}} (\frac{-1}{p})^{n_{1}-b} h^{B}_{e_{0}^{n_{2}-1}e_{1}e_{0}^{a}}(i) \sum_{\substack{i_{1}+\cdots+i_{b}=l \\ \forall j,  p\nmid i_{j} }} \frac{1}{b!\cdot i_{1}\cdots i_{b}}  h^{B}_{e_{0}^{n-1-b}e_{1}}(j) . $$

$\bullet$ $d'=2$ : as in the $d'=1$ case of depth $1$, we have $h^{B}_{e_{0}^{a}}(i)=0$ except when $a=i=0$ where this is $1$, thus we can restrict the sums over $a$ and $i$ to $a=0$ and $i=0$. Thus we have to consider the $e_{1}$-segments of the words $(e_{0}^{n_{2}-1-b}e_{1}e_{0}^{n_{1}-1}e_{1})^{\rev}=e_{1}e_{0}^{n_{1}-1}e_{1}e_{0}^{n_{2}-1-b}$, for $b\geq 0$ such that $b \leq n_{2}-1$. They are of two possible types :

(i) $(e_{1}e_{0}^{n_{1}-1}e_{1}e_{0}^{n_{2}-1-b-k})$, $0\leq k \leq n_{2}-1-b$. The corresponding contraction of $e_{1}e_{0}^{n_{1}-1}e_{1}e_{0}^{n_{2}-1-b}$ is $e_{1}e_{0}^{k}$. 
By \eqref{eq:adjoint pMZVs},
we have 
$$\zeta_{p}^{\Ad}(e_{1}e_{0}^{n_{1}-1}e_{1}e_{0}^{n_{2}-1-b-k}) =  \left\{ 
\begin{array}{ll} 0 & \text{ if } k=n_{2}-1-b \\ {-n_{1} \choose n_{2}-1-b-k} \zeta_{p}(n_{1}+n_{2}-1-b-k) & \text{if } k<n_{2}-1-b  \\
\end{array} 
\right. . $$ 

Thus we obtain the term 

\begin{multline*} -\sum_{\substack{b\geq 0 \\ b \leq n_{d'}-1}}
\sum_{\substack{0\leq j,l \\ l+pj = m}} \bigg( 
\sum_{\substack{i_1+\cdots+i_b=l \\ (i_1,p)=\cdots=(i_b,p)=1}}\frac{1}{b!\cdot i_1\cdots i_b} \cdot
\sum_{k=0}^{n_{2}-2-b}
\frac{(-1)^{n_1+n_2-b}}{p^{k+1}}
{-n_{1} \choose n_{2}-1-b-k} \zeta_{p}(n_{1}+n_{2}-1-b-k)
 h^{B}_{e_{1}e_{0}^{k}} (j) \bigg).
\end{multline*}

(ii) $(e_{1}e_{0}^{n_{1}-1-k''},e_{0}^{k'}e_{1}e_{0}^{n_{2}-1-b-k})$
with $0 \leq k',k''\leq n_{1}-1$ such that $k'\leq k''$ and $0\leq k \leq n_{2}-1-b$. The corresponding contraction of 
$(e_{0}^{n_{2}-1-b}e_{1}e_{0}^{n_{1}-1}e_{1})^{\rev}= e_{1}e_{0}^{n_{1}-1}e_{1}e_{0}^{n_{2}-1-b}$ is $e_{1}e_{0}^{k''-k'}e_{1}e_{0}^{k}$.

We have $\zeta_{p}^{\Ad}(e_{1}e_{0}^{n_{1}-1-k''}) 
= \left\{ 
\begin{array}{ll} 
1 & \text{if }k''=n_{1}-1,
\\0  & \text{if }k''<n_{1}-1
\end{array}
\right.$,
$\zeta_{p}^{\Ad}(e_{0}^{k'}e_{1}e_{0}^{n_{2}-1-b-k}) 
= \left\{ 
\begin{array}{ll} 
1 & \text{if }k'=0,k=n_{2}-1-b,
\\ 0  & \text{if }k'>0 \text{ or }k<n_{2}-1-b
\end{array}
\right. 
$.

Thus we obtain the term 

\begin{equation*} 
\sum_{\substack{b\geq 0 \\ b \leq n_{2}-1}}
\sum_{\substack{0\leq j,l \\ l+pj = m}} \bigg( 
\sum_{\substack{i_1+\cdots+i_b=l \\ (i_1,p)=\cdots=(i_b,p)=1}}\frac{1}{b!\cdot i_1\cdots i_b} \cdot
\frac{(-1)^{n_{1}}}{p^{n_{2}-1}} 
\cdot h^{B}_{e_{1}e_{0}^{n_{1}-1}e_{1}e_{0}^{n_{2}-1-b}}\bigg) . 
\end{equation*}

\qed

We finish this paper by a few comments.

\begin{rem} A slightly more general version of the main theorem could be stated and proved similarly, by considering not only the indices $(n_{1},\ldots,n_{d})=e_{0}^{n_{d}-1}e_{1}\cdots e_{0}^{n_{1}-1}e_{1}$ but more generally the indices $e_{0}^{n_{d}-1}e_{1}\cdots e_{0}^{n_{1}-1}e_{1}e_{0}^{n_{0}-1}$, with $n_{0}\in \mathbb{N}_{\geq 1}$. Here, we have restricted the formula to $n_{0}=1$. This is sufficient because, by the shuffle equation and the fact that $\Phi_{p}^{\De}[e_{0}]=0$, we have 
$$(\Phi_{p}^{\De}[e_{0}^{n_{d}-1}e_{1}\cdots e_{0}^{n_{1}-1}e_{1}e_{0}^{n_{0}-1}] = \sum\limits_{\substack{k_{1},\ldots,k_{d}\geq 0 \\ k_{1}+\cdots+k_{d}=n_{0}-1}} \prod_{i=1}^{d} {-n_{i} \choose k_{i}} \zeta_{p}^{\De}(n_{1}+k_{1},\ldots,n_{d}+k_{d}).$$
\end{rem}

\begin{rem}
A generalization of Deligne's $p$-adic multiple zeta values was introduced by the second author \cite{J1} : for all $\alpha \in \mathbb{Z} \setminus \{0\}$ we have an element  $\Phi_{p,\alpha} \in \mathbb{Q}_{p} \langle\langle e_0,e_1\rangle\rangle$ and the numbers $\zeta_{p,\alpha}(n_{1},\ldots,n_{d}) = (-1)^{d}(\Phi_{p,a})_{e_{0}^{n_{d}-1}e_{1} \cdots e_{0}^{n_{1}-1}e_{1}}$ \cite[Definition 1.2.2]{J1}. The $\alpha=1$ case is equivalent to the case treated in this paper. Acutally, our proof works similarly for any $\alpha$. If $\alpha > 0$, the formulas similar with $p$ replaced by $p^{\alpha}$. If $\alpha<0$, we have instead of Equation \eqref{eq:overconvergent polylog} a different but similar equation for overconvergent $p$-adic multiple polylogarithms (see for example \cite[Equation (5.3)]{J3}, and the proof and formulas are still similar.
\end{rem}

\begin{rem} Usually, computations of $p$-adic multiple zeta values have a direct generalization to $p$-adic cylotomic multiple zeta values. Here, however, the automorphism $z \mapsto \frac{z}{z-1}$ of $\mathbb{P}^{1} \setminus \{0,1,\infty\}$ plays a central role, and it does not stabilize $\mathbb{P}^{1} \setminus \{0,\mu_{N},\infty\}$. Thus, there is no direct generalization of our results to the cyclotomic case. Instead, a generalization in a larger sense would involve other operations on the unipotent fundamental group of $\mathbb{P}^{1} \setminus \{0,\mu_{N},\infty\}$.
\end{rem}


\end{document}